\newtheorem{thm}{Theorem}[section]
\newtheorem{cor}[thm]{Corollary}
\newtheorem{lem}[thm]{Lemma}
\newtheorem{prop}[thm]{Proposition}
\theoremstyle{definition}
\newtheorem{defn}[thm]{Definition}
\newtheorem{exam}[thm]{Example}
\newtheorem{rem}[thm]{Remark}
\numberwithin{equation}{section}
\newcommand{\N}{\mathbb{N}}
\newcommand{\calF}{\mathcal{F}}
\newcommand{\calA}{\mathcal{A}}
\newcommand{\calB}{\mathcal{B}}
\newcommand{\calC}{\mathcal{C}}
\newcommand{\calS}{\mathcal{S}}
\newcommand{\scrS}{\mathscr{S}}
\newcommand{\scrB}{\mathscr{B}}
\newcommand{\w}{\omega}
\newcommand{\normmm}[1]{{\left\vert\kern-0.25ex\left\vert\kern-0.25ex\left\vert #1
   \right\vert\kern-0.25ex\right\vert\kern-0.25ex\right\vert}}
\begin{document}
\title[Invariance entropy for uncertain control systems]{Invariance entropy for uncertain control systems}
\author{Xingfu Zhong}
\address{School of Mathematics and Statistics, Guangdong University of Foreign Studies\\
Guangzhou, 510006 P. R. China}
\email{xfzhong@gdufs.edu.cn}

\author{Yu Huang}
\address{School of Mathematics, Sun Yat-Sen University,
GuangZhou 510275, P. R. China}
\email{stshyu@mail.sysu.edu.cn}

\author{Xingfu Zou}
\address{Department of Applied Mathematics, University of Western Ontario London, Ontario, N6A 5B7, Canada}
\email{xzou@uwo.ca}

\subjclass[2020]{37B40; 93C55}
\keywords{Invariance entropy, invariance feedback entropy, topological entropy}

\begin{abstract}
We introduce a notion of invariance entropy for \emph{uncertain} control systems, which is, roughly speaking, the exponential growth rate of ``branches'' of ``trees'' that are formed by controls and are necessary to achieve invariance of controlled invariant subsets of the state space. This entropy extends the invariance entropy for \emph{deterministic} control systems introduced by Colonius and Kawan (2009). We show that invariance feedback entropy, proposed by Tomar, Rungger, and Zamani (2020), is bounded from below by our invariance entropy. We generalize the formula for the calculation of entropy of invariant partitions obtained by Tomar, Kawan, and Zamani (2020) to quasi-invariant-partitions. Moreover, we also derive lower and upper bounds for entropy of a quasi-invariant-partition by spectral radii of its adjacency matrix and weighted adjacency matrix. With some reasonable assumptions, we obtain explicit formulas for computing invariance entropy for \emph{uncertain} control systems and invariance feedback entropy for finite controlled invariant sets.
\end{abstract}


\maketitle

\section{Introduction}
Entropy for a dynamical system is an intrinsic quantity that measures the complexity  of the system. There are two popular dynamical entropies: one is measure-theoretic entropy and the other is topological entropy. The former was introduced by one of the most
influential mathematicians of modern times, Kolmogorov~\cite{Kolmogorov1958A},
and improved by his student Sinai~\cite{Sina1959On} who practically brought it to the contemporary form; the latter was proposed via open covers by Adler et al.~\cite{Adler1965} and was redefined by Dinaburg~\cite{Dinaburg1970} and
Bowen~\cite{Bowen1971} independently in the language of metric spaces. These two definitions of dynamical entropies resemble the definition of Shannon's entropy~\cite{shannon1948mathematical}. Both of them measure the exponential rates of growth of numbers of orbits in some sense:
\begin{itemize}
  \item measure-theoretic entropy counts the number of typical $n$-orbits, while
  \item topological entropy counts all distinguishable $n$-orbits.
\end{itemize}
Note that topological entropy is equal to the supremum of measure-theoretic entropy over all invariant measures (this basic relationship between topological entropy and measure-theoretic entropy is called variational principle). We refer the reader to references~\cite{Downarowicz2011Entropy,Young2003Entropy,Katok2007Fifty} for more details about the history of dynamical entropies.

Invariance (or stabilization) is another important notion that described a widely needed property for control systems. When a control system involves communication of information, an interesting question involving invariance is how much information practically needs to be communicated between the coder and controller in order to make a given set invariant under information constraint(s). Early work on this topics are Delchamps~\cite{Delchamps1990Stabilizing} and Wong and Brockeet~\cite{Wong-Brockett1999}, in which they respectively investigated quantized feedbacks and the influence of restricted communication channels for stabilization. The seminal work by Nair et al.~\cite{Nair2004Topological} first addressed the problem of
data-rate-limited stabilization by introducing topological feedback entropy. This entropy characterizes the smallest average data rate at which a subset of the state space can be made invariant and it resembles the topological entropy by using similar
open cover techniques. Then Colonius and Kawan~\cite{Colonius-Kawan2009} introduced an invariance entropy to describe the exponential growth rate of different control functions sufficient for making a subset of the state space invariant. The definition of this invariance entropy is analogous to that of the topological entropy by Dinaburg~\cite{Dinaburg1970} and Bowen~\cite{Bowen1971} by replacing distinguishable orbits by different control functions. Colonius et al.~\cite{Colonius2013A} showed that this invariance entropy and the topological feedback entropy are equivalent with some suitable modifications.

For better understanding of invariance entropy, various notions relating to invariance entropy have been proposed by several groups of researchers from different views, such as invariance pressure~\cite{Colonius2018PressureA,Colonius2018PressureB,
Colonius2019Bounds,
Zhong2020Variational,Zhong2018Invariance,Colonius2021Controllability}, measure-theoretic versions of invariance entropy~\cite{Colonius2018Invariance,Colonius2018Metric,Wang2019MeasureInvariance,Wang-Huang2022Inverse},
dimension types of invariance entropy~\cite{Huang2018Caratheodory}, complexity of invariance entropy~\cite{Wang2019dichotomy,Zhong2020Equi,Zhong2022Equi}. Note that Kawan and Y\"{u}ksel~\cite{Kawan-Yuksel2019Stabilization-entropy} introduced a notion of stabilization entropy which is a variant of invariance entropy. We refer the reader to the monograph written by Kawan~\cite{Kawan2013} for more details about invariance entropy of \emph{deterministic} control systems and to~\cite{savkin2006analysis,Liberzon2018Entropy,Kawan-Matveev-Pogromsky2021Remote} and the references therein for observability that is another data-rate-limited task and is closely related to controlled invariance.

In the context of \emph{uncertain} control systems, invariance feedback entropy (IFE) was introduced by Rungger and Zamani~\cite{Rungger2017Invariance} to quantify the state information required by any controller to render a subset of the state space invariant. Later, Tomar et al.~\cite{Tomar2017invariance} and Tomar and Zamani~\cite{Tomar2020Compositional} further investigated the properties of IFE. Recently, Tomar et al.~\cite{Tomar2020Numerical} presented wonderful algorithms for the numerical computation of invariance entropy for \emph{deterministic} control systems and IFE for \emph{uncertain} control systems respectively. Particularly, they showed that the entropy with respect to an invariant partition is equivalent to the maximum mean cycle weight (MMCW) of the weighted graph associated with this partition. Their algorithms allow us to compute upper bounds for invariance entropy of \emph{deterministic} control systems and IFE of \emph{uncertain} control systems. From the above, there arise the following questions naturally:
\begin{itemize}
  \item[Q1.] Whether or not there is an analogous version of the invariance entropy introduced by Colonius and Kawan via controls for \emph{uncertain} control systems? Note that the definition of IFE (see Subsection~\ref{subsec:ife}) begins with a cover. If such a version exists, how is the relation between this invariance entropy and IFE for \emph{uncertain} control systems?
  \item[Q2.] If the formula for the calculation of entropy of an invariant partition, obtained by Tomar et al.~\cite{Tomar2020Numerical}, also holds for some ``weak'' invariant partitions?
  \item[Q3.] Which conditions guarantee the existence of ``generators''? By a generator we mean an invariant cover such that IFE is equal to the entropy of this cover.
\end{itemize}

In order to answer these questions, we introduce a new notion of invariance entropy via control functions for \emph{uncertain} control systems. Roughly speaking, our invariance entropy for \emph{uncertain} control systems is the exponential growth rate of ``branches'' of ``trees''. Such trees are formed by control functions that are necessary to make the target set invariant. We emphasize that our notion of invariance entropy discussed in the sequel is for \emph{uncertain} control system and that Colonius and Kawan have used the term ``invariance entropy'' for \emph{deterministic} control system.

The structure of the paper is as follows. In Section~\ref{sec:invariance-entropy}, we introduce the concept of invariance entropy for \emph{uncertain} control system, give some basic properties for this invariance entropy, and show that this invariance entropy is less than or equal to the invariance feedback entropy (an answer to Q1, see Theorem~\ref{thm:inv-less-ifb}). It is worthy to note that invariance entropy is equal to topological feedback entropy for \emph{deterministic} controls systems, see~\cite{Colonius2013A} or~\cite{Kawan2013}. In Section~\ref{sec:calculation}, we derive some formulas for the calculation of our invariance entropy and IFE. We show that the invariance entropy for a controlled invariant set equals to the
logarithm of the spectral radius of its admissible matrix under some technical assumption (see Theorem~\ref{thm:symbolic-log-radius}). We also extend the formula for the calculation of entropy of invariant partitions, obtained by Tomar et al.~\cite{Tomar2020Numerical}, to quasi-invariant-partitions; that is, the entropy for a quasi-invariant-partition is equal to
the maximal mean weight of this partition (an answer to Q2, see Theorem~\ref{thm:invariant-partition-log-weight}), and show that if the spectral radius of the adjacency matrix of this partition is $1$, then the entropy for this partition equals to the logarithm
of the spectral radius of its weighted adjacency matrix (see
Theorem~\ref{thm:upper-lower-bounds-partition}). Finally, we show that there exists generators for a finite controlled invariant set; i.e., IFE for a finite controlled invariant set is equal to the entropy of its atom partition (an answer to Q3, see Theorem~\ref{thm:IFE-equi-atom-partitions}).

\section{Invariance entropy}\label{sec:invariance-entropy}
This section consists of two subsections. The first presents some basic properties for invariance entropy. The second recalls the definition of invariance feedback entropy and shows that invariance entropy is a tight lower bound for invariance feedback entropy.

\subsection{Invariance entropy}
First let us introduce terminology and notation. (We borrow some terms and signs from~\cite{Colonius-Kawan2009}.) We use $f:A\rightrightarrows B$ to denote a \emph{set-valued map} from $A$ into $B$, whereas $f:A\rightarrow B$ denotes an ordinary map. If $f$ is set-valued, then $f$ is \emph{strict} if for every $a\in A$ we have $f(a)\neq\emptyset$. The composition of $f:A\rightrightarrows B$ and $g:C\rightrightarrows A$, $(f\circ g)(x)=f(g(x))$ is denoted by $f\circ g$. We call a triple $\Sigma:=(X,U,F)$ a \emph{system} if
$X$ and $U$ are nonempty sets and $F: X\times U\rightrightarrows X$ is strict.
Recall that $Q\subset X$ is called \emph{controlled invariant} with respect to a system $\Sigma=(X,U,F)$, if for every $x\in Q$ there exists $u\in U$ such that $F(x,u)\subset Q$. Fixing $u\in U$ and $Q\subset X$, let $Q_u=\{x\in Q| F(x,u)\subset Q\}$.

A subset $S\subset U^n$ is said to be \emph{an admissible family of length $n$} for $Q$ if
\begin{itemize}
  \item[(a).] $\omega'_0=\omega''_0$ for any $\omega',\omega''\in S$;
  \item[(b).] there exists $x\in Q$ such that for any $\omega\in S$,
\begin{align*}
&F(I_\omega^i(x),\omega_i)\subset\bigcup_{\substack{\omega'\in S,\\ \omega'_{[0,i]}=\omega_{[0,i]}}}Q_{\omega'_{i+1}}, \\
&I_{\omega}^{i+1}(x)=F(I_\omega^{i}(x),\omega_i)\cap Q_{\omega_{i+1}}\neq\emptyset,\forall~i=0,1,\ldots,n-2,\\
&I_{\omega}^{n}(x)=F(I_\omega^{n-1}(x),\omega_{n-1})\subset Q,
\end{align*}
where $I_\omega^{0}(x)=x$.
\end{itemize}

Let
\[AF^n(Q)=\{S\subset U^n: S~\text{is an admissible family for}~Q\}\]
and
\[AF(Q)=\bigcup_{n=1}^\infty AF^n(Q).\]
Given $S\in AF(Q)$, the set of points that satisfy condition (b) is denoted by $Q_S$.

Let $K\subset Q$ be a nonempty set. A set $\mathscr{S}\subset U^n$ is called an \emph{$(n,K,Q)$-spanning set} of $(K,Q)$ if
\[K\subset \bigcup_{S\subset\mathscr{S}~\text{and}~S\in AF^n(Q)}Q_S.\]

We use the notation $r_{inv}(n,K,Q)$ for the minimal number of a spanning set, i.e.,
\[r_{inv}(n,K,Q):=\inf\{\sharp \scrS:\scrS~\text{is an}~(n,K,Q)\text{-spanning set of}~(K,Q)~\},\]
where $\sharp\scrS$ denotes the cardinality of $\scrS$. For convenience, we write $r_{inv}(n,Q)$ in place of $r_{inv}(n,Q,Q)$.

\begin{defn}
Given a pair $(K,Q)$, we define the \emph{invariance entropy} of $(K,Q)$ by
\[h_{inv}(K,Q)=h_{inv}(K,Q;\Sigma):=\limsup_{n\to\infty}\frac{\log  r_{inv}(n,K,Q)}{n},\]
where $\log$ signifies the logarithm base $2$.
\end{defn}

\begin{rem}
When $F$ is a single-valued map, the definition of invariance entropy coincides with that of invariance entropy for \emph{deterministic} control systems (see~\cite[Definition 2.2]{Kawan2013}).
\end{rem}

Recall that a sequence of real numbers $\{a_n\}_{n\geq1}$ is \emph{subadditive} if $a_{n+p}\leq a_n+a_p$ for all $n,p\in\N$.

\begin{lem}\label{lem:subadditive}(\cite[Theorem 4.9]{Walters1982} or \cite[Lemma B.3]{Kawan2013})
If $\{a_n\}_{n\geq1}$ is a subadditive sequence, then $\lim_{n\to\infty}\frac{a_n}{n}$ exists and equals $\inf_{n}\frac{a_n}{n}$.
\end{lem}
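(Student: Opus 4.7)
The plan is to prove $\limsup_{n\to\infty} \frac{a_n}{n} \leq L \leq \liminf_{n\to\infty} \frac{a_n}{n}$, where $L := \inf_{m \geq 1} \frac{a_m}{m} \in [-\infty, +\infty)$; together these identities yield existence of the limit and its equality with $L$. The lower bound is immediate from the definition of the infimum: each term of the sequence $\frac{a_n}{n}$ is at least $L$, so $\liminf_n \frac{a_n}{n} \geq L$.

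For the upper bound, I would fix an arbitrary $m \in \N$ and perform Euclidean division $n = q(n)\, m + r(n)$ with $0 \leq r(n) < m$ for each $n \geq m$. Iterating subadditivity gives $a_{q(n)m} \leq q(n)\, a_m$, and one further application yields $a_n \leq q(n)\, a_m + a_{r(n)}$ when $r(n) \geq 1$, with the obvious simplification $a_n \leq q(n)\, a_m$ when $r(n) = 0$. Dividing by $n$, the first term $\frac{q(n)\, m}{n} \cdot \frac{a_m}{m}$ tends to $\frac{a_m}{m}$ as $n \to \infty$ because $\frac{q(n)\, m}{n} \to 1$, while $\frac{a_{r(n)}}{n} \to 0$ since $a_{r(n)}$ ranges over the finite set $\{a_1, \ldots, a_{m-1}\}$ and is therefore bounded. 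This produces $\limsup_n \frac{a_n}{n} \leq \frac{a_m}{m}$, and taking the infimum over $m$ delivers $\limsup_n \frac{a_n}{n} \leq L$ as required.

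The only genuine obstacle is the case $L = -\infty$, in which one cannot directly compare with a finite infimum. Here the remedy is to replace $L$ by an arbitrarily large negative number $-M$: choose $m$ with $\frac{a_m}{m} < -M$, run the division argument above to conclude $\limsup_n \frac{a_n}{n} \leq -M$, and let $M \to \infty$. The remaining items—the book-keeping convention when $r(n) = 0$, and the restriction $n \geq m$ so that division is meaningful—are routine. No properties of the sequence beyond subadditivity are used, so the argument is self-contained and requires no prior results from the paper.
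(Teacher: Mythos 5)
Your proof is correct: it is the standard Fekete division-algorithm argument, which is exactly what the paper relies on by citing Walters (Theorem 4.9) and Kawan (Lemma B.3) rather than giving its own proof. Both the finite and $-\infty$ cases are handled properly (indeed the per-$m$ bound $\limsup_n a_n/n \le a_m/m$ already covers both at once), so nothing is missing.
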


The rest of this subsection will generalize some properties of invariance entropy for \emph{deterministic} control systems (see~\cite{Kawan2013}) to \emph{uncertain} control systems, including finiteness, time discretization, finite stability, and invariance under conjugacy.

\begin{prop}\label{prop:subadditive-of-invariance-entropy}
Let $\Sigma=(X,U,F)$ be a system and $Q\subset X$ be a controlled invariant set. Then the following assertions hold:
\begin{enumerate}
  \item The number $r_{inv}(n,Q)$ is either finite for all $n\in\N$ or for none.
  \item The function $n\mapsto\log r_{inv}(n,Q)$, $\N\to[0,+\infty]$, is subadditive and thus
\[h_{inv}(Q)=\lim_{n\to\infty}\frac1{n}\log r_{inv}(n,Q).\]
\end{enumerate}
\end{prop}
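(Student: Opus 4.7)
The plan is to establish part~(2) first; part~(1) will then follow by combining the subadditivity obtained there with a short truncation argument. By Lemma~\ref{lem:subadditive}, the claim that $\lim_{n\to\infty} n^{-1}\log r_{inv}(n,Q)$ exists and equals the infimum is immediate once one knows $n\mapsto \log r_{inv}(n,Q)$ is subadditive, and subadditivity is equivalent to the multiplicative bound
$$r_{inv}(n+p,Q)\le r_{inv}(n,Q)\cdot r_{inv}(p,Q).$$

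To prove this bound, I would choose $(n,Q,Q)$- and $(p,Q,Q)$-spanning sets $\scrS_1\subset U^n$ and $\scrS_2\subset U^p$ of minimal cardinality and form the concatenation
$$\scrS := \scrS_1\cdot\scrS_2 = \{\omega_1\cdot\omega_2 : \omega_1\in\scrS_1,\ \omega_2\in\scrS_2\}\subset U^{n+p},$$
which has cardinality at most $r_{inv}(n,Q)\cdot r_{inv}(p,Q)$. For a fixed $x\in Q$, pick an admissible $S_1\subset\scrS_1$ with $x\in Q_{S_1}$ and, for every pair $(\omega_1,y)$ with $\omega_1\in S_1$ and $y\in I_{\omega_1}^n(x)\subset Q$, pick $S_2(y)\subset\scrS_2$ with $y\in Q_{S_2(y)}$; set $S := \bigcup_{\omega_1,y}\{\omega_1\}\cdot S_2(y)\subset\scrS$. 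The goal is to verify that $S$ is admissible with witness $x$, which gives $x\in Q_S$ and thus $\scrS$ is $(n+p,Q,Q)$-spanning. I would split the verification of condition~(b) into three regimes of the index $i$: for $0\le i\le n-2$ the two inclusions are inherited directly from $S_1$, since any extension of $\omega_1$ in $S_1$ sharing its length-$(i+1)$ prefix gives an extension in $S$ with the same length-$(i+1)$ prefix; at the junction $i=n-1$ the required cover $I_{\omega_1}^n(x)\subset\bigcup_{\omega'\in S,\,\omega'_{[0,n-1]}=\omega_1}Q_{\omega'_n}$ follows because each $y\in I_{\omega_1}^n(x)$ lies in $Q_{u_y}$, where $u_y$ is the common first coordinate of $S_2(y)$ forced by~(a) and $y\in Q_{u_y}$ is a consequence of $y\in Q_{S_2(y)}$ together with the step-$0$ inclusion of~(b) for $S_2(y)$; for $n\le i\le n+p-2$ one must propagate admissibility forward along the trees generated by the $S_2(y)$'s.

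For part~(1), suppose $r_{inv}(n,Q)<\infty$ is witnessed by some finite spanning $\scrS\subset U^n$. For $m<n$, the projection $\{\omega_{[0,m-1]} : \omega\in\scrS\}$ is an $(m,Q,Q)$-spanning set of no greater cardinality, since every admissible $S\subset\scrS$ with witness $x$ truncates to an admissible family of length $m$ with the same witness, the terminal condition $I^m\subset Q$ being supplied by the stronger interior inclusion at step $i=m-1\le n-2$ of the original family. For $m>n$, iterating the multiplicative bound yields $r_{inv}(m,Q)\le r_{inv}(n,Q)^{\lceil m/n\rceil}<\infty$. Hence finiteness at a single $n$ forces finiteness at every $n\in\N$. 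The main obstacle I anticipate is the third regime in the previous paragraph: because $F$ is set-valued, $I_{\omega}^n(x)=I_{\omega_1}^n(x)\cap Q_{u_y}$ can contain points $y'\neq y$ whose assigned family $S_2(y')$ uses a different first coordinate $u_{y'}\neq u_y$, so the forward image $F(I_{\omega}^n(x),u_y)$ is not automatically covered by admissibility of any single $S_2(y')$. I would address this by induction on $j=i-n$, expressing $I_{\omega}^{n+j}(x)$ as a union over endpoints $y'$ that remain compatible with the prefix of $\omega$, so that the $j$-th-step inclusions of the relevant $S_2(y')$'s collectively furnish the required cover.
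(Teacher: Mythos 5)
Your overall route is the same as the paper's: establish $r_{inv}(n+p,Q)\le r_{inv}(n,Q)\cdot r_{inv}(p,Q)$ by concatenating minimal spanning sets, invoke Lemma~\ref{lem:subadditive}, and deduce part (1) from truncation for smaller times together with iteration of the multiplicative bound for larger ones. (The paper merely reverses the order, proving (1) first via a $k$-fold concatenation of a single minimal $(N,Q)$-spanning set and then declaring (2) ``similar''.) The pieces you verify in detail are correct and match what the paper leaves implicit: the truncation argument for $m<n$, condition (a) for the concatenated family $S$, the inheritance of condition (b) for $0\le i\le n-2$, and the junction step $i=n-1$, where $I_{\omega_1}^n(x)\subset\bigcup_{y}Q_{u_y}$ follows from $F(y,u_y)\subset Q$ for each terminal point $y$.

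The obstacle you flag for $n\le i\le n+p-2$ is, however, not removed by the induction you sketch. Write $Y=I_{\omega_1}^n(x)$ and $u=(S_2(y))_0$ for the branch $\omega=\omega_1\omega''$ with $\omega''\in S_2(y)$. Your decomposition $I_\omega^{n+j}(x)=\bigcup_{y'\in Y\cap Q_u}I_{\omega''}^{j}(y')$ is valid, but the step-$j$ cover inclusion of $S_2(y')$ controls $F(I_{\omega''}^{j}(y'),\omega''_j)$ only if $\omega''_{[0,j]}$ is a prefix of some word of $S_2(y')$. For a point $y'\in Y\cap Q_u$ whose assigned family has first symbol $u_{y'}\neq u$, no chosen family contains any word beginning with $u$ applicable to $y'$, so nothing constrains how $F(y',u)$ sits inside the sets $Q_v$ indexed by the continuations of $\omega_1u$ available in $S$; one only knows $F(y',u)\subset Q$, which does not give the cover condition at $i=n$, and the same defect propagates to every later $i$. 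So the ``collective furnishing'' is exactly the unproved assertion, restated at each $j$. You should know that the paper's own proof has the identical lacuna: it forms the full product $S_x^1\cdot\bar S_x^2$ and simply asserts $x\in Q_{S_x^2}$ without addressing these overlap points. Your proposal is therefore no less rigorous than the printed argument, but as written it is not a complete proof at precisely the one step that requires work; closing it needs either an argument that every $y'\in I_{\omega_1}^n(x)\cap Q_u$ admits an admissible family in $\scrS_2$ beginning with $u$ whose branches lie in $S$, or a modified construction of the $(n+p)$-spanning set.
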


\begin{proof}
(1) Suppose there exists $N\in\N$ such that $r_{inv}(N,Q)<\infty$. It is easy to check that $r_{inv}(n,Q)\leq r_{inv}(N,Q)$ for every $n<N$. We now show that $r_{inv}(n,Q)< \infty$ for every $n>N$. Given $n\geq N$, pick $k\in\N$ such that $kN>n$. Let $\scrS=\{\omega^1,\ldots,\omega^m\}$ be a minimal $(N,Q)$-spanning set, where $m=r_{inv}(N,Q)$, and let
\[\scrS_k=\{\omega\in U^{kN}:~\forall ~0\leq i\leq k-1 ~\exists~ \omega'\in \scrS~\text{s.t.}~\omega_{[Ni,(i+1)N)}=\omega'\}.\]
We shall show that $\scrS_k$ is a $(kN,Q)$-spanning set. For every $x\in Q$ there exists $S_x^1\subset \scrS$ such that $\omega'_0=\omega''_0$ for any $\omega',\omega''\in S_x^1$ and for any $\omega\in S_x^1$,
\begin{align*}
&F(I_\omega^i(x),\omega_i)\subset\bigcup_{\substack{\omega'\in S_x^1,\\ \omega'_{[0,i]}=\omega_{[0,i]}}}Q_{\omega'_{i+1}}, \\
&I_{\omega}^{i+1}(x)=F(I_\omega^{i}(x),\omega_i)\cap Q_{\omega_{i+1}}\neq\emptyset,\forall~i=0,1,\ldots,N-2,\\
&I_{\omega}^{N}(x)=F(I_\omega^{n-1}(x),\omega_{N-1})\subset Q,
\end{align*}
where $I_\omega^{0}(x)=x$.
For every $y\in I_{\omega}^{N}(x)\subset Q$ there exists $S_{\omega,y}\subset \scrS$ such that $\omega'_0=\omega''_0$ for any $\omega',\omega''\in S_{\omega,y}$ and for any $\bar{\omega}\in S_{\omega,y}$,
\begin{align*}
&F(I_{\bar{\omega}}^i(y),{\bar{\omega}}_i)\subset\bigcup_{\substack{{\bar{\omega}}'\in S_{\omega,y},\\ {\bar{\omega}}'_{[0,i]}={\bar{\omega}}_{[0,i]}}}Q_{{\bar{\omega}}'_{i+1}}, \\
&I_{{\bar{\omega}}}^{i+1}(y)=F(I_{\bar{\omega}}^{i}(y),{\bar{\omega}}_i)\cap Q_{{\bar{\omega}}_{i+1}}\neq\emptyset,\forall~i=0,1,\ldots,N-2,\\
&I_{{\bar{\omega}}}^{N}(y)=F(I_{\bar{\omega}}^{N-1}(y),{\bar{\omega}}_{N-1})\subset Q,
\end{align*}
where $I_{\bar{\omega}}^{0}(y)=y$.
Let $\bar{S}_x^2=\cup_{\omega\in S_x^1}\cup_{y\in I_{\omega}^{N}(x)}S_{\omega,y}$ and
\[S_x^2:=\{\omega\in U^{2N}:\exists \omega'\in S_x^1,\omega''\in\bar{S}_x^2~\text{s.t.}~\omega_{[0,N)}=\omega'~\text{and}
~\omega_{[N,2N)}=\omega''\}\subset \scrS_2.
\]
Hence we have $x\in Q_{S_x^2}$. Repeating the process $k$ times, we can obtain $S_x^k\subset \scrS_k$ such that $x\in Q_{S_x^k}$.
This means that $\scrS_k$ is a $(kN,Q)$-spanning set.

(2) By Lemma~\ref{lem:subadditive}, we shall prove that
\[r_{inv}(n+p,Q)\leq r_{inv}(n,Q)\cdot r_{inv}(p,Q),~\forall~n,p\in\N.\]
Assume that $\scrS_1$ is a minimal $(n,Q)$-spanning set and $\scrS_2$ is a minimal $(p,Q)$-spanning set. Let
\[\scrS=\{\omega\in U^{n+p}:\exists~\omega'\in \scrS_1,~\omega''\in \scrS_2~\text{s.t.}~\omega_{[0,n)}=\omega'~\text{and}~\omega_{[n,n+p-1)}=\omega''\}.\]
Similar to the proof of (1), we can show that $\scrS$ is an $(n+p,Q)$-spanning set. So
\[r_{inv}(n+p,Q)\leq r_{inv}(n,Q)\cdot r_{inv}(p,Q),~\forall~n,p\in\N,\]
which completes the proof.
\end{proof}

\begin{rem}
We see from Proposition~\ref{prop:subadditive-of-invariance-entropy} that $r_{inv}(n,Q)$ is finite for some $n$ if and only if $r_{inv}(n,Q)$ is finite for all $n$ if and only if $h_{inv}(Q)$ is finite.
\end{rem}

\begin{prop}[Time discretization]\label{prop:time-discretization}
Let $\Sigma=(X,U,F)$ be a system and $Q\subset X$ be a controlled invariant set. If $K\subset Q$ and $m\in\N$, then
\[h_{inv}(K,Q)=\limsup_{n\to\infty}\frac{1}{nm}\log r_{inv}(nm,K,Q).\]
\end{prop}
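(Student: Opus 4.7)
The direction $\limsup_{n\to\infty}\tfrac{1}{nm}\log r_{inv}(nm,K,Q)\le h_{inv}(K,Q)$ is immediate, since $\{nm\}_{n\in\N}$ is a subsequence of $\N$. For the reverse inequality, the plan is to combine a monotonicity-in-time lemma with the standard Bowen-type trick of writing an arbitrary $n$ as $km+s$.

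First I would prove the following monotonicity statement: for every $1\le n\le N$,
\[
r_{inv}(n,K,Q)\le r_{inv}(N,K,Q).
\]
Let $\scrS\subset U^N$ be a minimal $(N,K,Q)$-spanning set and set $\scrS':=\{\omega|_{[0,n)}:\omega\in\scrS\}\subset U^n$, so $\sharp\scrS'\le\sharp\scrS$. For each $x\in K$ pick $S\subset\scrS$ with $S\in AF^N(Q)$ and $x\in Q_S$, and let $S':=\{\omega|_{[0,n)}:\omega\in S\}\subset\scrS'$. I claim $S'\in AF^n(Q)$ with $x\in Q_{S'}$. Condition (a) is inherited from $S$. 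For condition (b), observe that the iterates $I_\omega^i(x)$ for $\omega\in S'$ and $0\le i\le n-1$ depend only on coordinates $\omega_0,\ldots,\omega_{i-1}$, so they agree with $I_{\omega^*}^i(x)$ for any lift $\omega^*\in S$. For $0\le i\le n-2$, coordinates at position $i+1\le n-1$ are preserved by projection, hence
\[
\{\omega'_{i+1}:\omega'\in S',\,\omega'_{[0,i]}=\omega_{[0,i]}\}=\{\omega^{**}_{i+1}:\omega^{**}\in S,\,\omega^{**}_{[0,i]}=\omega^{*}_{[0,i]}\},
\]
so the union condition and the nonemptiness of $I_\omega^{i+1}(x)$ transfer verbatim from $S$ to $S'$. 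Finally, the terminal condition $F(I_\omega^{n-1}(x),\omega_{n-1})\subset Q$ follows either directly from $S$ when $n=N$, or from applying the union condition for $S$ at index $n-1\le N-2$ together with $Q_u\subset Q$ for every $u$. Thus $\scrS'$ is an $(n,K,Q)$-spanning set and the monotonicity inequality holds.

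With monotonicity in hand, for any $n\ge m$ write $n=km+s$ with $0\le s<m$, so that $n\le(k+1)m$. Then
\[
\frac{\log r_{inv}(n,K,Q)}{n}\le\frac{\log r_{inv}((k+1)m,K,Q)}{n}=\frac{(k+1)m}{n}\cdot\frac{\log r_{inv}((k+1)m,K,Q)}{(k+1)m}.
\]
As $n\to\infty$ we have $k\to\infty$ and $(k+1)m/n\to 1$, so taking $\limsup$ yields
\[
h_{inv}(K,Q)\le\limsup_{k\to\infty}\frac{\log r_{inv}((k+1)m,K,Q)}{(k+1)m}=\limsup_{n\to\infty}\frac{\log r_{inv}(nm,K,Q)}{nm},
\]
which completes the proof. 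The main obstacle is the monotonicity lemma: it is a conceptual observation but requires careful bookkeeping to verify that projecting an admissible family to its first $n$ coordinates preserves all three clauses of condition (b), in particular the terminal inclusion into $Q$ when $n<N$.
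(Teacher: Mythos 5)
Your proof is correct and follows essentially the same route as the paper: both reduce the nontrivial inequality to the monotonicity $r_{inv}(n,K,Q)\le r_{inv}(N,K,Q)$ for $n\le N$ and then sandwich an arbitrary time between consecutive multiples of $m$. The only difference is that you spell out the monotonicity (including the terminal inclusion into $Q$ via $Q_u\subset Q$), which the paper invokes as ``easy to check'' in the proof of its Proposition on subadditivity; your verification of that step is sound.
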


\begin{proof}
It is clear that $h_{inv}(K,Q)\geq\limsup_{n\to\infty}\frac{1}{nm}\log r_{inv}(nm,K,Q)$. We now show the converse inequality. By definition of $h_{inv}(K,Q)$, we can take a sequence $\{q_k\}_{k\geq1}$ such that
\[h_{inv}(K,Q)=\lim_{k\to\infty}\frac{1}{q_k}\log r_{inv}(q_k,K,Q).\]
For every $k\geq m$, there exists $n_k\geq1$ such that $n_km\leq q_k\leq (n_k+1)m$, and $n_k\to\infty$ for $k\to\infty$. Then we have $r_{inv}(q_k,K,Q)\leq r_{inv}((n_k+1)m,K,Q)$.
It follows that
\[\frac{1}{q_k}\log r_{inv}(q_k,K,Q)\leq \frac{1}{n_km}\log r_{inv}((n_k+1)m,K,Q).\]
It follows by a straightforward computation that
\begin{align*}
h_{inv}(K,Q)\leq \limsup_{n\to\infty} \frac{1}{nm}\log r_{inv}(nm,K,Q).
\end{align*}
\end{proof}

\begin{prop}[Subsets rule or finite stability]\label{prop:subset-rule}
Let $\Sigma=(X,U,F)$ be a system and $Q\subset X$ be a controlled invariant set. If $K\subset Q$ and $K=\cup_{i=1}^mK_i$. Then $h_{inv}(K,Q)=\max\limits_{i=1,\ldots,m}h_{inv}(K_i,Q)$.
\end{prop}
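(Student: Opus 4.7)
The plan is to prove the equality by establishing the two inequalities separately, each following almost immediately from the definition of $r_{inv}$.

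For $h_{inv}(K,Q)\geq\max_i h_{inv}(K_i,Q)$, I would use monotonicity. Any $(n,K,Q)$-spanning set $\scrS$ satisfies $K\subset\bigcup_{S\subset\scrS,\,S\in AF^n(Q)}Q_S$, so for each $i$ the subset $K_i\subset K$ is also contained in this union, which means $\scrS$ is simultaneously $(n,K_i,Q)$-spanning. Therefore $r_{inv}(n,K_i,Q)\leq r_{inv}(n,K,Q)$ for every $n$ and every $i$; dividing by $n$ and passing to the limsup yields $h_{inv}(K_i,Q)\leq h_{inv}(K,Q)$, and hence $\max_i h_{inv}(K_i,Q)\leq h_{inv}(K,Q)$.

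For the reverse inequality, I would choose, for each $i\in\{1,\ldots,m\}$, a minimal $(n,K_i,Q)$-spanning set $\scrS_i$, and form the union $\scrS:=\bigcup_{i=1}^m\scrS_i\subset U^n$. Since every admissible subfamily $S\subset\scrS_i$ is likewise an admissible subfamily of the larger collection $\scrS$, each $K_i$ is contained in $\bigcup_{S\subset\scrS,\,S\in AF^n(Q)}Q_S$, and so is $K=\bigcup_{i=1}^m K_i$. Thus $\scrS$ is $(n,K,Q)$-spanning, which gives
\[
r_{inv}(n,K,Q)\leq \sharp\scrS\leq\sum_{i=1}^m r_{inv}(n,K_i,Q)\leq m\cdot\max_{i}r_{inv}(n,K_i,Q).
\]

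After taking logarithms, dividing by $n$, and applying $\limsup_{n\to\infty}$, the term $\log m/n$ disappears, and the only point requiring any care is the identity
\[
\limsup_{n\to\infty}\max_{i}\tfrac{1}{n}\log r_{inv}(n,K_i,Q)=\max_{i}h_{inv}(K_i,Q),
\]
which holds because the $\max$ over the finite index set $\{1,\ldots,m\}$ commutes with $\sup_{n\geq N}$ and with the monotone limit $N\to\infty$. I do not foresee any serious obstacle here: the whole argument is a direct analogue of the finite stability property for topological entropy, with the union-of-spanning-sets construction doing all the work.
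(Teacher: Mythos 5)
Your proof is correct and follows essentially the same route as the paper: monotonicity for one inequality, and the union of minimal $(n,K_i,Q)$-spanning sets giving $r_{inv}(n,K,Q)\leq m\cdot\max_i r_{inv}(n,K_i,Q)$ for the other. The only cosmetic difference is at the end, where the paper extracts a subsequence on which the maximizing index is constant (pigeonhole), while you invoke the equally valid identity $\limsup_n\max_i=\max_i\limsup_n$ for finitely many sequences.
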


\begin{proof}
Obviously, we have $h_{inv}(K,Q)\geq\max\limits_{i=1,\ldots,m}h_{inv}(K_i,Q)$. To show the converse inequality, note that $r_{inv}(n,K,Q)\leq\sum_{i=1}^mr_{inv}(n,K_i,Q)$. For every $n$ pick $\hat{K}_{n}\in\{K_1,\ldots,K_m\}$ such that
\[r_{inv}(n,\hat{K}_{n},Q)=\max_{i=1,\ldots,m}r_{inv}(n,K_i,Q).\]
It immediately follows that
\[r_{inv}(n,K,Q)\leq m\cdot r_{inv}(n,\hat{K}_{n},Q).\]
Thus
\[\log r_{inv}(n,K,Q)\leq\log m+\log r_{inv}(n,\hat{K}_{n},Q).\]
Choose $n_k\to\infty$ such that
\[\lim_{k\to\infty}\frac{1}{n_k}\log r_{inv}(n_k,K,Q)=\limsup_{n\to\infty}\frac{1}{n}\log r_{inv}(n,K,Q)\]
and $\hat{K}_{n_k}=K_j$ for some $j\in\{1,2,\ldots,m\}$ and all $k$.
A brief calculation then shows that
\begin{align*}
h_{inv}(K,Q)&=\lim_{k\to\infty}\frac{1}{n_k}\log r_{inv}(n_k,K,Q)\\
&\leq\limsup_{k\to\infty}\frac{1}{n_k}\big(\log m+\log r_{inv}(n_k,K_{j},Q)\big)\\
&\leq\limsup_{k\to\infty}\frac{1}{n_k}\log r_{inv}(n_k,K_{j},Q)\\
&\leq\limsup_{n\to\infty}\frac{1}{n}\log r_{inv}(n,K_{j},Q)\\
&=h_{inv}(K_j,Q)\leq\max_{i=1,\ldots,m}h_{inv}(K_i,Q).
\end{align*}
\end{proof}

Consider two systems $\Sigma_i= (X_i, U_i, F_i)$, $i=1,2$. Let $\pi:X_1\to X_2$ be a continuous map and $r:U_1\to U_2$ a map. We say $(\pi,r)$ is a \emph{semi-conjugacy from $\Sigma_1$ to $\Sigma_2$} if
\[F_2(\pi(x),r(u))\subset\pi(F_1(x,u)),~\forall~x\in X_1,~u\in U_1.\]

\begin{prop}
Let $\Sigma_1= (X_1, U_1, F_1)$ and $\Sigma_2= (X_2, U_2, F_2)$ be two systems, $Q\subset X_1$ be controlled invariant, and $(\pi,r)$ a semi-conjugacy from $\Sigma_1$ to $\Sigma_2$. Then for any $K\subset Q$,
\[h_{inv}(K,Q;\Sigma_1)\geq h_{inv}(\pi(K),\pi(Q);\Sigma_2).\]
\end{prop}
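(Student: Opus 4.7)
The plan is to establish, for each $n\in\N$, the inequality
\[
r_{inv}(n,\pi(K),\pi(Q);\Sigma_2)\leq r_{inv}(n,K,Q;\Sigma_1),
\]
from which the desired bound on $h_{inv}$ follows immediately by dividing by $n$ and passing to $\limsup$. Concretely, given an $(n,K,Q)$-spanning set $\scrS\subset U_1^n$ for $\Sigma_1$, I would push it forward to the componentwise image $\bar\scrS:=\{(r(\omega_0),\ldots,r(\omega_{n-1})):\omega\in\scrS\}\subset U_2^n$, which trivially satisfies $|\bar\scrS|\leq|\scrS|$. The work then lies in showing that $\bar\scrS$ is $(n,\pi(K),\pi(Q))$-spanning for $\Sigma_2$.

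Fix $\bar x\in\pi(K)$, select $x\in K$ with $\pi(x)=\bar x$, and use the spanning property of $\scrS$ to extract an admissible $S\subset\scrS$ with $x\in Q_S$. Let $\bar S:=r_*(S)\subset\bar\scrS$; the core task is to verify that $\bar S$ is admissible for $\pi(Q)$ in $\Sigma_2$ with witness $\bar x$. Condition (a) is immediate, since $r_*$ preserves common prefixes. For condition (b), the essential step is the inductive containment $\bar I^i_{\bar\omega}(\bar x)\subset\pi(I^i_\omega(x))$ for $i=0,1,\ldots,n$ and $\bar\omega=r_*(\omega)$, where $\bar I^i$ denotes the $\Sigma_2$-iterates. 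The induction step combines the semi-conjugacy inclusion $F_2(\pi(y),r(u))\subset\pi(F_1(y,u))$ applied pointwise inside $F_2(\bar I^i,r(\omega_i))$ with the auxiliary fact $\pi(Q_v)\subset\pi(Q)_{r(v)}$---itself a direct consequence of the semi-conjugacy, because $F_1(z,v)\subset Q$ forces $F_2(\pi(z),r(v))\subset\pi(F_1(z,v))\subset\pi(Q)$---to carry the intersection with $\pi(Q)_{r(\omega_{i+1})}$ through. From this containment the cover condition in (b) and the terminal inclusion $\bar I^n_{\bar\omega}(\bar x)\subset\pi(Q)$ both drop out routinely.

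I expect the subtle part to be verifying nonemptiness of the intermediate iterates $\bar I^{i+1}_{\bar\omega}(\bar x)=F_2(\bar I^i_{\bar\omega}(\bar x),\bar\omega_i)\cap\pi(Q)_{\bar\omega_{i+1}}$ for $i\leq n-2$: since the semi-conjugacy inclusion is one-sided, the containment $\bar I^i\subset\pi(I^i)$ does not by itself force this intersection to be nonempty. I would attack this by running a lifting argument in parallel with the induction: starting from a witness $z\in I^{i+1}_\omega(x)$, which is nonempty by admissibility of $S$ in $\Sigma_1$, and invoking strictness of $F_2$ together with the branch-splitting coverage provided by $S$ (i.e.\ property~(i) in the definition of admissible family), one certifies that some image in $\Sigma_2$ is forced to lie simultaneously in $F_2(\bar I^i,r(\omega_i))$ and in $\pi(Q)_{r(\omega_{i+1})}$.

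Once admissibility of $\bar S$ is in hand, $\bar x\in\pi(Q)_{\bar S}$, whence
\[
\pi(K)\subset\bigcup\bigl\{\pi(Q)_{\bar S'}:\bar S'\subset\bar\scrS,\ \bar S'\in AF^n(\pi(Q))\bigr\}.
\]
This yields $r_{inv}(n,\pi(K),\pi(Q);\Sigma_2)\leq|\bar\scrS|\leq|\scrS|=r_{inv}(n,K,Q;\Sigma_1)$, and the claimed inequality on invariance entropy follows.
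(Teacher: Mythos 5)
Your overall strategy---push a minimal $(n,K,Q)$-spanning set forward through $r$ and verify the spanning property in $\Sigma_2$---is the same as the paper's, and the reduction of the entropy inequality to $r_{inv}(n,\pi(K),\pi(Q);\Sigma_2)\leq r_{inv}(n,K,Q;\Sigma_1)$ is fine, as is the auxiliary fact $\pi(Q_v)\subset\pi(Q)_{r(v)}$. The gap is exactly at the point you flag as subtle, and your proposed repair does not close it. Taking $\bar S:=r_*(S)$, the \emph{full} componentwise image of the admissible family $S$, is the wrong object: admissibility requires $\bar I^{i+1}_{\bar\omega}(\bar x)\neq\emptyset$ for \emph{every} $\bar\omega\in\bar S$, and this can genuinely fail. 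Since the semi-conjugacy inclusion is one-sided, $F_2\bigl(\bar I^i_{\bar\omega}(\bar x),r(\omega_i)\bigr)$ may be a proper subset of $\pi\bigl(F_1(I^i_\omega(x),\omega_i)\bigr)$ that misses $\pi(Q_{\omega_{i+1}})$ entirely even though $F_1(I^i_\omega(x),\omega_i)\cap Q_{\omega_{i+1}}\neq\emptyset$. Your lifting argument starts from a witness $z\in I^{i+1}_\omega(x)$, but there is no reason for $\pi(z)$ to lie in $F_2\bigl(\bar I^i_{\bar\omega}(\bar x),r(\omega_i)\bigr)$; strictness of $F_2$ and the branch-covering condition only guarantee that each point of $F_2\bigl(\bar I^i_{\bar\omega}(\bar x),r(\omega_i)\bigr)$ lands in $\pi(Q)_{r(\omega'_{i+1})}$ for \emph{some} continuation $\omega'$ of $\omega_{[0,i]}$ in $S$, not for the particular continuation $\omega_{i+1}$.

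The fix---and what the paper actually does---is to prune rather than to prove nonemptiness for all branches: at each stage one tracks a set of $\pi$-preimages inside $I^i_\omega(x)$ of the points of the $\Sigma_2$-iterate, and keeps only those symbols $\omega'_{i+1}$ whose cell $Q_{\omega'_{i+1}}$ actually meets that preimage set. This produces a subfamily $\hat S\subset r_*(S)\subset\bar\scrS$ which \emph{is} admissible for $\pi(Q)$ with $\bar x\in\pi(Q)_{\hat S}$, and that suffices because the definition of a spanning set only asks that $\pi(K)$ be covered by the sets $\pi(Q)_{\hat S}$ over admissible \emph{subsets} $\hat S$ of $\bar\scrS$. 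With that modification your argument goes through and the cardinality bound $\sharp\bar\scrS\leq\sharp\scrS$ is unaffected.
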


\begin{proof}
Suppose $\scrS\subset U_1^n$ is an $(n,K,Q)$-spanning set of $(K,Q)$. Let \[r(\scrS)=\{r(\w_0)r(\w_1)\cdots r(\w_{n-1}):\w\in\scrS\}.\]
Then $r(\scrS)\subset U_2^{n}$. We shall show that $r(\scrS)$ is an $(n,\pi(K),\pi(Q))$-spanning set of $(\pi(K),\pi(Q))$. To this end, fix $y\in\pi(K)$. Thus there exists $x\in K$ with $\pi(x)=y$. Since $\scrS$ is an $(n,K,Q)$-spanning set, there exists $S\subset\scrS$ such that $S\in AF^n(Q)$ and $x\in Q_S$. We will find a subset $\hat{S}\subset r(S)$ such that $\hat{S}\in AF^n(\pi(Q))$ and $y\in \pi(Q)_{\hat{S}}$. To see this, let
\[S_{m}=\{s\in U^{m+1}:\exists~\omega\in S~\text{s.t.}~\omega_{[0,m]}=s\},~m\in\{0,1,\ldots,n-1\}.\]
Then we have $S_0=\{s_0\}$, where $s_0$ is the common initial symbol for every $\omega\in S$, and
\[
F_1(x,s_0)\subset\bigcup_{\substack{s\in S_1}}Q_{s_1},~~
F_1(x,s_0)\cap Q_{s_1}\neq\emptyset,~\forall~s\in S_1.
\]
Since $(\pi,r)$ is a semi-conjugacy from $\Sigma_1$ to $\Sigma_2$,
\[F_2(y,r(s_0))=F_2(\pi(x),r(s_0))\subset\pi(F_1(x,s_0))\subset\pi(Q).\]
Let $\hat{S_0}=r(S_0)$. Then $\hat{S}_0\in AF^1(\pi(Q))$ and $y\in \pi(Q)_{\hat{S}_0}$.
Let $B_{r(s_0)}=F_2(\pi(x),r(s_0))$. Then for every $z\in B_{r(s_0)}$ there exists $\hat{z}\in F_1(x,s_0)$ such that $\pi(\hat{z})=z$. Denote the set of all these points by $A_{s_0}$. Thus $\pi(A_{s_0})=B_{r(s_0)}$ and
\[A_{s_0}\subset F_1(x,s_0)\subset\bigcup_{\substack{s\in S_1}}Q_{s_1}.\]
Let
\[A[s_0]=\{s\in S_1:A_{s_0}\cap Q_{s_1}\neq\emptyset\}.\]
Set $\hat{S}_1=r(A[s_0])$. Thus for any $\hat{s}\in\hat{S}_1$,
\[B_{r(s_0)}\subset \cup_{\hat{s}\in\hat{S}_1}\pi(Q)_{\hat{s}_1}~\text{and}~
B_{r(s_0)}\cap\pi(Q)_{\hat{s}_1}\neq\emptyset\]
and
\[F_2(B_{r(s_0)}\cap\pi(Q)_{\hat{s}_1},\hat{s}_1)\subset\pi(F_1(A_{s_0}\cap Q_{s_1},s_1))\subset\pi(Q).\]
Then $\hat{S}_1\in AF^2(\pi(Q))$ and $y\in \pi(Q)_{\hat{S}_1}$.

Repeating this process, we can find the desired $\hat{S}\in AF^n(\pi(Q))$ and $y\in \pi(Q)_{\hat{S}}$. Hence, $r(\scrS)$ is an $(n,\pi(K),\pi(Q))$ of $(\pi(K),\pi(Q))$. This completes the proof.
\end{proof}

\subsection{Invariance feedback entropy}\label{subsec:ife}
Let us recall the concept of invariance feedback entropy proposed by Tomar et al.~\cite{Tomar2017invariance}. Assume that $\Sigma= (X, U, F)$ is a system and $Q\subset X$ is controlled invariant. A pair $(\mathcal{A},G)$ is called an \emph{invariant cover} of $Q$
if $\mathcal{A}$ is a finite cover of $Q$ and $G$ is a map $G:\mathcal{A}\to U$ such that for every $A\in\mathcal{A}$ we have $F(A,G(A))\subset Q$.

Suppose $(\mathcal{A},G)$ is an invariant cover of $Q$; and let $n\in\N$ and  $\mathcal{S}\subset\mathcal{A}^n$ be a set of sequences in $\mathcal{A}$. For $\alpha\in \mathcal{S}$ and $t\in [0,n-1)$ we define
\[P(\alpha|_{[0,t]}):=\{A\in\mathcal{A}|\exists\hat{\alpha}\in\mathcal{S}~\text{s.t.}~
\hat{\alpha}|_{[0,t]}=\alpha|_{[0,t]}~\text{and}~A=\hat{\alpha}_{t+1}\}.\]
The set $P(\alpha|_{[0,t]})$ contains the cover elements $A$ so that the sequence $\alpha|_{[0,t]}A$ can be extended to a sequence in $\mathcal{S}$.
If $t=n-1$ then $\alpha|_{[0,n-1]}=\alpha$ and define
\[P(\alpha):=\left\{A\in\mathcal{A}|\exists\hat{\alpha}\in\mathcal{S}~\text{s.t.}~ A=\hat{\alpha}_0\right\},\]
which is actually independent of $\alpha\in\mathcal{S}$ and corresponds to the ``initial" cover elements $A$ in $\mathcal{S}$, i.e., there exists $\alpha\in\mathcal{S}$ with $A=\alpha(0)$.  A set $\mathcal{S}\subset\mathcal{A}^n$ is called \emph{$(n,Q)$-spanning} in $(\mathcal{A},G)$ if
\begin{itemize}
  \item[(1).] the set $P(\alpha)$ with $\alpha\in\mathcal{S}$ covers $Q$;
  \item[(2).] for every $\alpha\in\calS$ and $t\in[0,n-1)$, we have
\[F(\alpha(t), G(\alpha(t))) \subseteq \bigcup_{A^{\prime} \in P(\alpha|_{[0, t]})} A^{\prime}.\]
\end{itemize}
The \emph{expansion number} $N(\mathcal{S})$ associated with $\mathcal{S}$  is defined by
\[N(\mathcal{S}):=\max _{\alpha \in \mathcal{S}} \prod_{t=0}^{n-1} \sharp P\left(\left.\alpha\right|_{[0,t]}\right).\]
Let
\[r_{inv}(n,Q,\mathcal{A},G):=\min\{N(\mathcal{S})|\mathcal{S}~\text{is}~(n,Q)\text{-spanning in} (\mathcal{A},Q)\}.\]
Since $\log r_{inv}(\cdot,Q,\mathcal{A},G)$ is subadditive (see Lemma 1 in~\cite{Tomar2017invariance}), the following limit exits
\[h_{inv}(\mathcal{A},G):=\lim_{n\to\infty}\frac{1}{n}\log r_{inv}(n,Q,\mathcal{A},G),\]
and it is called \emph{entropy} of $(\mathcal{A},G)$.
The \emph{invariance feedback entropy }of $Q$ is defined as
\[h_{inv}^{fb}(Q):=\inf _{(\mathcal{A}, G)} h_{inv}(\mathcal{A}, G),\]
where the infimum is taken over all invariant covers of $Q$.
The following theorem states that invariance entropy is bounded above by invariance feedback entropy.

\begin{thm}\label{thm:inv-less-ifb}
If $\Sigma=(X,U,F)$ is a system and $Q\subset X$ is a controlled invariant set, then
\[h_{inv}(Q)\leq h_{inv}^{fb}(Q).\]
\end{thm}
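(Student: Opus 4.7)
The plan is to prove the sharper quantitative inequality $r_{inv}(n, Q) \leq r_{inv}(n, Q, \calA, G)$ for every invariant cover $(\calA, G)$ and every $n \in \N$; dividing by $n$, taking $\limsup$, and finally the infimum over invariant covers will yield $h_{inv}(Q) \leq h_{inv}^{fb}(Q)$.

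Fix such a cover and an $(n, Q)$-spanning set $\calS \subset \calA^n$ realizing $r_{inv}(n, Q, \calA, G) = N(\calS)$. Extend $G$ coordinatewise to a map $r : \calA^n \to U^n$ and set $\scrS := r(\calS) \subset U^n$. Clearly $\sharp \scrS \leq \sharp \calS$, and viewing $\calS$ as a rooted tree whose node $\alpha|_{[0,t]}$ has $\sharp P(\alpha|_{[0,t]})$ children, a standard induction on depth gives $\sharp \calS \leq \max_\alpha \prod_{t=0}^{n-1} \sharp P(\alpha|_{[0,t]}) = N(\calS)$, hence $\sharp \scrS \leq N(\calS)$.

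To check that $\scrS$ is $(n, Q, Q)$-spanning, fix $x \in Q$, pick $A_0 \in P$ with $x \in A_0$ (available since $P$ covers $Q$), and build an adaptive subtree $T_x \subset \calS$ as follows. Starting from the prefix $(A_0)$ with tracked set $J^0 = \{x\}$, inductively extend each prefix $(A_0, \ldots, A_i)$ carrying a nonempty tracked set $J^i \subset A_i$ by every $A_{i+1} \in P(A_0, \ldots, A_i)$ such that $J^{i+1} := F(J^i, G(A_i)) \cap A_{i+1} \neq \emptyset$; set $S_x := r(T_x) \subset \scrS$. Admissibility condition (a) is immediate. For any $\omega = r(\alpha) \in S_x$, a short induction using $\alpha_{i+1} \subset Q_{G(\alpha_{i+1})} = Q_{\omega_{i+1}}$ shows $J^i_\alpha \subset I^i_\omega(x)$, which gives nonemptiness condition (b2); condition (b3) then follows from $I^{n-1}_\omega(x) \subset Q_{\omega_{n-1}}$, whence $F(I^{n-1}_\omega(x), \omega_{n-1}) \subset Q$.

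The hardest step is the inclusion condition (b1), namely $F(I^i_\omega(x), \omega_i) \subset \bigcup_{\omega' \in S_x,\, \omega'|_{[0,i]} = \omega|_{[0,i]}} Q_{\omega'_{i+1}}$. The main obstacle is that the formal state $I^i_\omega(x)$ may strictly exceed the tracked set $J^i$, because the intersection with $Q_{\omega_i}$ can capture ``spurious'' points lying in cover elements with $G$-value different from $\omega_i$, so that the naive inductive claim $I^i_\omega(x) \subset \bigcup\{\alpha_i : \alpha \in T_x,\ r(\alpha)|_{[0,i]} = \omega|_{[0,i]}\}$ does not obviously propagate through a further intersection with $Q_{\omega_{i+1}}$. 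I expect to manage this by strengthening the inductive hypothesis so as to bound $F(I^i_\omega(x), \omega_i)$ directly by the union of all level-$(i+1)$ cover elements appearing in branches of $T_x$ that carry the prefix $\omega|_{[0,i]}$, combining the $\calS$-spanning inclusion $F(A, G(A)) \subset \bigcup_{A' \in P(\alpha|_{[0,i]})} A'$ applied branchwise with the elementary inclusion $A' \subset Q_{G(A')}$, which converts the cover-element bound into the required bound by the $Q_{\omega'_{i+1}}$.
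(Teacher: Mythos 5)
Your overall route is the same as the paper's: given an invariant cover $(\calA,G)$ and an $(n,Q)$-spanning set $\calS\subset\calA^n$ in $(\calA,G)$, push it forward coordinatewise by $G$ to obtain $\scrS\subset U^n$, argue that $\scrS$ is $(n,Q)$-spanning in the control sense, and conclude via $r_{inv}(n,Q)\le\sharp\scrS\le\sharp\calS\le N(\calS)\le r_{inv}(n,Q,\calA,G)$. Your tree-counting argument for $\sharp\calS\le N(\calS)$ is correct (it reproves Lemma~2 of Tomar et al., which the paper simply cites), and your verifications of condition (a), of (b2) via the inclusion $J^i_\alpha\subset I^i_{r(\alpha)}(x)$, and of (b3) via $I^{n-1}_\omega(x)\subset Q_{\omega_{n-1}}$ are sound.

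The gap is exactly where you locate it, and the strengthening you sketch does not close it. Suppose inductively that $F(I^i_\omega(x),\omega_i)$ is contained in the union of the level-$(i+1)$ cover elements $\alpha_{i+1}$ over branches $\alpha\in T_x$ with $r(\alpha)|_{[0,i]}=\omega|_{[0,i]}$. The next state $I^{i+1}_\omega(x)=F(I^i_\omega(x),\omega_i)\cap Q_{\omega_{i+1}}$ may contain a point $z$ lying only in cover elements $\alpha_{i+1}$ with $G(\alpha_{i+1})\neq\omega_{i+1}$, because membership in $Q_{\omega_{i+1}}$ is a pointwise condition not confined to cover elements whose $G$-value is $\omega_{i+1}$. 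For such $z$, the cover-spanning inclusion $F(A,G(A))\subset\bigcup_{A'\in P(\cdot)}A'$ gives no information about $F(z,\omega_{i+1})$, since $\omega_{i+1}$ is not the control assigned to any cover element known to contain $z$; all one obtains is $F(z,\omega_{i+1})\subset Q$, which does not place $F(z,\omega_{i+1})$ inside $\bigcup_{\omega'}Q_{\omega'_{i+2}}$ for the finitely many $\omega'\in S_x$ extending $\omega|_{[0,i+1]}$. Thus the strengthened hypothesis fails to propagate for the same reason as the naive one, and condition (b1) remains unproved. For what it is worth, the paper's own proof passes over this point with the single sentence ``It is obvious that $Q\subset\bigcup_{S\subset\scrS,\,S\in AF^n(Q)}Q_S$,'' so you have correctly isolated the only nontrivial step; but as written your proposal does not supply the missing argument, and I do not see how to complete it along the lines you indicate without either enlarging $S_x$ beyond $r(T_x)$ or exploiting structure not present in the definitions.
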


\begin{proof}
Suppose that $(\mathcal{A},G)$ is an invariant cover of $Q$, $n\in\N$, and $\mathcal{S}\subset\mathcal{A}^n$ is $(n,Q)$-spanning in $(\mathcal{A},Q)$. Let $\mathscr{S}=\{G(\alpha)|\alpha\in\mathcal{S}\}$,
where $G(\alpha):=G(\alpha_0)\cdots G(\alpha_{n-1})$.
It is obvious that
\[Q\subset \bigcup_{S\subset\mathscr{S}~\text{and}~S\in AF^n(Q)}Q_S.\]
Hence, $\mathscr{S}$ is an $(n,Q)$-spanning set of $Q$ and $\sharp\mathscr{S}\leq\sharp\mathcal{S}$. It follows that $r_{inv}(n,Q)\leq\sharp\mathcal{S}$. Applying Lemma 2 in~\cite{Tomar2017invariance}, we have $r_{inv}(n,Q)\leq N(\mathcal{S})$, which implies that $r_{inv}(n,Q)\leq r_{inv}(n,Q,\mathcal{A},G)$. Thus we obtain the desired inequality.
\end{proof}

The following two examples illustrate that both $h_{inv}(Q)<h_{inv}^{fb}(Q)$
and $h_{inv}(Q)=h_{inv}^{fb}(Q)$ may be possible.

\begin{exam}
Let $\Sigma=(X,U,F)$ be a system, where $X=\{0,1,2\}$ and $U=\{a,b\}$. The transition function $F$ is illustrated by

\begin{center}
\begin{tikzpicture}[scale=0.5]
\tikzset{
  LabelStyle/.style = {text = black, font = \bfseries,fill=white },
  VertexStyle/.style = { fill=white, draw=black,circle,font = \bfseries},
  EdgeStyle/.style = {-latex, bend left} }
  \SetGraphUnit{5}
  \Vertex{1}
  \WE(1){0}
  \tikzset{VertexStyle/.style = { fill=white, draw=black,circle,font = \bfseries,dashed}}
  \EA(1){2}
  \Edge[label = a](0)(1)
  \Loop[dist = 4cm, dir = NO, label = a,style={-latex,thick}](0.west)
  \tikzset{EdgeStyle/.style = {-latex}}
  \Loop[dist = 4cm,dir=NO,label = b,style={-latex,thick}](1.north)
  \tikzset{EdgeStyle/.style = {-latex, bend right,dashed}}
  \Edge[label = b](0)(2)
  \tikzset{EdgeStyle/.style = {-latex, bend right,dashed}}
  \Edge[label = b](2)(1)
  \tikzset{EdgeStyle/.style = {-latex,dashed}}
  \Edge[label = a](1)(2)
  \tikzset{EdgeStyle/.style = {bend right,dashed}}
  \Loop[dist = 4cm,dir=SO,label = a,style={-latex,thick}](2.east)
\end{tikzpicture}
\end{center}
The set of interest is $Q:=\{0,1\}$. Then $h_{inv}(Q)=0$ and $h_{inv}^{fb}(Q)=1$.
\end{exam}

\begin{proof}
Let $\mathscr{S}=\{a^ib^{n-1-i}|i=0,1,\ldots,n-1\}$. It is not difficult to check that $\mathscr{S}$ is an $(n,Q)$-spanning set of $Q$. So $h_{inv}(Q)=0$. Put $\mathcal{A}=\{\{0\},\{1\}\}$, and define $G:\mathcal{A}\to U$ by $G(\{0\})=a$ and $G(\{1\})=b$. We shall show that $h_{inv}^{fb}(Q)=1$. Suppose that $\mathcal{S}\subset\mathcal{A}^{n}$ is $(n,Q)$-spanning in $(\mathcal{A},G)$. Then $\alpha=\underbrace{\{0\}\ldots\{0\}}_{n}\in\mathcal{S}$. This yields that $N(\mathcal{S})=2^n$ and
\[r_{inv}(n,Q,\mathcal{A},G)=2^n.\]
It then follows that $h(\mathcal{A},G)=1$. Since $(\mathcal{A},G)$ is the only invariant cover of $Q$, we obtain $h_{inv}^{fb}(Q)=1$.
\end{proof}

\begin{exam}\label{exa:inv-equas-IFE}
Let $\Sigma=(X,U,F)$ be a system, where $X=\{0,1,2\}$ and $U=\{a,b\}$. The transition function $F$ is illustrated by

\begin{center}
\begin{tikzpicture}[scale=0.5]
\tikzset{
  LabelStyle/.style = {text = black, font = \bfseries,fill=white },
  VertexStyle/.style = { fill=white, draw=black,circle,font = \bfseries},
  EdgeStyle/.style = {-latex, bend left} }
  \SetGraphUnit{5}
  \tikzset{VertexStyle/.style = { fill=white, draw=black,circle,font = \bfseries,dashed}}
  \Vertex{1}
  \tikzset{VertexStyle/.style = { fill=white, draw=black,circle,font = \bfseries}}
  \EA(1){2}
  \WE(1){0}
  \tikzset{EdgeStyle/.style = {-latex, bend left,dashed}}
  \Edge[label = b](0)(1)
  \Edge[label = b](1)(0)
  \tikzset{EdgeStyle/.style = {bend left}}
  \Loop[dist = 4cm, dir = NO, label = a,style={-latex,thick}](0.west)
  \tikzset{EdgeStyle/.style = {-latex, bend left=50}}
  \Edge[label = a](0)(2)
  \tikzset{EdgeStyle/.style = {-latex, bend left,dashed}}
  \Edge[label = a](2)(1)
  \tikzset{EdgeStyle/.style = {-latex,bend left,dashed}}
  \Edge[label = a](1)(2)
  \tikzset{EdgeStyle/.style = {bend right}}
  \Loop[dist = 4cm,dir=SO,label = b,style={-latex,thick}](2.east)
  \tikzset{EdgeStyle/.style = {-latex, bend left=50}}
  \Edge[label = b](2)(0)
\end{tikzpicture}
\end{center}
The set of interest is $Q:=\{0,2\}$. Then $h_{inv}(Q)=h_{inv}^{fb}(Q)=1$.
\end{exam}

\begin{proof}
We see that $h_{inv}^{fb}(Q)=1$ from Example 1 in~\cite{Tomar2017invariance}. We shall show that $h_{inv}(Q)=1$. Suppose that $\mathscr{S}\subset U^n$ is an $(n,Q)$-spanning set. Since $Q_a=\{0\}$, $Q_b=\{2\}$, $F(0,a)=F(2,b)=Q=\{0,2\}$, we have $U^n\subset\mathscr{S}$. It follows that $\sharp\mathscr{S}=2^n$. Hence $r_{inv}(n,Q)=2^n$ and $h_{inv}(Q)=1$.
\end{proof}

\section{Calculations for invariance entropy and IFE}\label{sec:calculation}
This section deals with the calculations for invariance entropy and invariance feedback entropy.

\subsection{Calculation for entropy of quasi-invariant-partitions}
Let $\Sigma=(X,U,F)$ be a system, $Q\subset X$ a controlled invariant set, and $(\calA,G)$ an invariant cover of $Q$. Before going further, we borrow some notations from~\cite{Tomar2017invariance} and introduce some new concepts.
For every $A\in\calA$, let $D_\calA(A):=\{A'\in\calA:F(A,G(A))\cap A'\neq\emptyset\}$ and $w_\calA(A):=\log \sharp D_\calA(A)$.
When there is no ambiguity, we write $D(A)$ and $w(A)$ instead of $D_\calA(A)$ and $w_\calA(A)$.
Given $m\in\N$, a sequence $(A_i)_{i=0}^m$ is called \emph{admissible} for $(\calA,G)$ if $F(A_i,G(A_i))\cap A_{i+1}\neq\emptyset$ for every $0\leq i< m$. Set
\begin{align*}
W_m(\calA,G):=\{(A_i)_{i=0}^{m-1}|(A_i)_{i=0}^{m-1}~\text{is admissible for}~(\calA,G) \}.
\end{align*}
A sequence $c=(A_i)_{i=0}^{k-1}$ is called \emph{an irreducible sequence of period $k$} for $(\calA,G)$ if $c^\infty$ is admissible for $(\calA,G)$ and $A_i\neq A_j$ for distinct $i,j$. (By ``$c^{\infty}$'' we mean $ccc\cdots$.) The \emph{period} of $c$ is defined as $k$ (denoted by $l(c)$) and the \emph{mean weight} for $c$ is defined as
\[\overline{w}(c):=\frac{1}{k}\sum_{i=0}^{k-1}w(A_i).\]
The \emph{maximum mean weight} $\overline{w}^*(\calA,G)$ is defined by
$\overline{w}^*(\calA,G):=\max_{c}\overline{w}(c)$,
where the maximum is taken over all irreducible periodic sequences for $(\calA,G)$.

The \emph{adjacency matrix} $M_{\calA,G}=(M_{AB})$ of $(\calA,G)$ is defined by
\[M_{AB}=\left\{
           \begin{array}{ll}
             1, & F(A,G(A))\cap B\neq\emptyset; \\
             0, & otherwise.
           \end{array}
         \right.
\]
We define the \emph{weighted adjacency matrix} $W_{\calA,G}=(W_{AB})$ with $A, B\in\calA$ of $(\calA,G)$ as
\[W_{AB}=\left\{
           \begin{array}{ll}
             \sharp D(A), & F(A,G(A))\cap B\neq\emptyset; \\
             0, & otherwise.
           \end{array}
         \right.
\]
Recall that the $l_\infty$-norm for a $n\times n$ matrix $M$ is defined by
$\|M\|_\infty=\max_{1\leq i,j\leq n}|a_{ij}|$.

An invariant cover $(\calA,G)$ is said to be a \emph{quasi-invariant-partition} of $Q$ if
\begin{equation}\label{eq:quasi-partition-a}
A\setminus\bigcup_{B\in\calA,B\neq A}B\neq\emptyset,~\forall A\in\calA
\end{equation}
and
\begin{equation}\label{eq:quasi-parition-b}
F(A,G(A))\bigcap(B\setminus \bigcup_{C\in D(A),~C\neq B}C)\neq\emptyset,~\forall A\in\calA, B\in D(A);
\end{equation}
an \emph{invariant partition} if $\calA$ is a partition of $Q$. Obviously an invariant partition is a quasi-invariant-partition.

Tomar et al. in~\cite{Tomar2020Numerical} obtained an interesting result for computing entropy of an invariant partition. Here we generalize this result to quasi-invariant-partitions.

\begin{thm}\label{thm:IFE-of-quasi-equals-log-sequences}
Suppose that $\Sigma=(X,U,F)$ is a system, $Q\subset X$ is controlled invariant, and $(\calA,G)$ is a quasi-invariant-partition of $Q$. Then
\[h_{inv}(\calA,G)=\lim_{m\to\infty}\frac{1}{m}\max_{\alpha\in W_m(\calA,G)}\sum_{i=0}^{m-2}w(\alpha(i)).\]
\end{thm}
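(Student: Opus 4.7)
The plan is to sandwich $r_{inv}(n,Q,\calA,G)$ between $2^{\sigma_n}$ and $\sharp\calA\cdot 2^{\sigma_n}$, where
\[\sigma_n:=\max_{\alpha\in W_n(\calA,G)}\sum_{i=0}^{n-2}w(\alpha(i)).\]
Dividing through by $n$, letting $n\to\infty$, and invoking that $\tfrac{1}{n}\log r_{inv}(n,Q,\calA,G)\to h_{inv}(\calA,G)$ (which follows from the subadditivity of $\log r_{inv}(\cdot,Q,\calA,G)$ established in~\cite{Tomar2017invariance}), the sandwich will force $\lim_n\sigma_n/n$ to exist and equal $h_{inv}(\calA,G)$.

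For the upper bound I will take the full family $\mathcal{S}^{*}:=W_n(\calA,G)$ of admissible length-$n$ sequences. Since $F(A,G(A))\subseteq Q\subseteq\bigcup\calA$ forces $D(A)\neq\emptyset$ for every $A\in\calA$, admissible prefixes always extend to length $n$, so $P(\alpha|_{[0,t]})=D(\alpha(t))$ for $t<n-1$ and $P(\alpha)=\calA$. The spanning property then holds because any point of $F(\alpha(t),G(\alpha(t)))$ lies, by the very definition of $D$, in some element of $D(\alpha(t))$, and the expansion number works out to $N(\mathcal{S}^{*})=\sharp\calA\cdot 2^{\sigma_n}$.

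For the lower bound I will fix any $(n,Q)$-spanning $\mathcal{S}$, choose $\beta\in W_n(\calA,G)$ attaining $\sigma_n$, and prove by induction on $t$ that some $\alpha\in\mathcal{S}$ satisfies $\alpha|_{[0,t+1]}=\beta|_{[0,t+1]}$. The base case will use quasi-invariant-partition condition (a): a witness in $\beta(0)\cap Q$ belonging to no other cover element must be covered by $P(\alpha)$, so $\beta(0)\in P(\alpha)$. The inductive step will use condition (b): given $\hat{\alpha}\in\mathcal{S}$ prolonging $\beta|_{[0,t]}$, (b) furnishes a point $p\in F(\beta(t),G(\beta(t)))\cap\beta(t+1)$ avoiding every other $C\in D(\beta(t))$; since any cover element containing $p$ must lie in $D(\beta(t))$ by definition of $D$, the spanning inclusion $F(\beta(t),G(\beta(t)))\subseteq\bigcup_{A'\in P(\hat{\alpha}|_{[0,t]})}A'$ forces $\beta(t+1)\in P(\hat{\alpha}|_{[0,t]})$, i.e.\ some element of $\mathcal{S}$ extends $\hat{\alpha}|_{[0,t]}$ by $\beta(t+1)$. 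Running the same argument with each $B\in D(\beta(t))$ in place of $\beta(t+1)$ (which is legitimate because $\beta|_{[0,t]}B$ can always be completed to a length-$n$ admissible sequence) yields $P(\beta|_{[0,t]})\supseteq D(\beta(t))$, so $N(\mathcal{S})\geq\prod_{t=0}^{n-2}\sharp D(\beta(t))=2^{\sigma_n}$.

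The crucial step is this inductive extraction of $\beta$ inside $\mathcal{S}$, and both quasi-invariant-partition conditions will be indispensable: without (b), overlaps among cover elements could let a spanning family cover $F(A,G(A))$ using a proper subcollection of $D(A)$ and collapse the lower bound; without (a), the base case of the induction would fail because nothing would prevent $\beta(0)$ from being ``replaced'' by another cover element in $P(\alpha)$. Once this step is in place, taking logarithms, dividing by $n$, and passing to the limit in the sandwich is routine and yields the claimed equality.
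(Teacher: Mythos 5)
Your proposal is correct and follows essentially the same route as the paper: your upper bound via $\mathcal{S}^{*}=W_n(\calA,G)$ is the paper's Claim~1, and your inductive extraction of the maximizing $\beta$ inside an arbitrary spanning set, using condition \eqref{eq:quasi-partition-a} for the base case and \eqref{eq:quasi-parition-b} for the step, is the paper's Claim~2 (which proves the slightly stronger statement $W_n(\calA,G)\subset\mathcal{S}$ and hence exact equality $r_{inv}(n,Q,\calA,G)=N(W_n(\calA,G))$ rather than a two-sided sandwich). The limit argument via subadditivity is identical.
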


\begin{proof}
\textbf{Claim 1.} $W_m(\calA,G)$ is an $(m,Q)$-spanning set of $(\calA,G)$ for $m\geq2$.

For every $\alpha\in W_m(\calA,G)$, $P(\alpha)=\{\alpha(0):\alpha\in W_m(\calA,G)\}=\calA$, $P(\alpha|_{[0,t]})=D(\alpha(t))$, and \[F(\alpha(t),G(\alpha(t)))\subset \bigcup_{A'\in D(\alpha(t))}A'.\]
Hence Claim 1 holds.

\textbf{Claim 2.} For any $(m,Q)$-spanning set $\calS$ in $(\calA,G)$, $W_m(\calA,G)\subset \calS$.

We apply the inductive argument to show this claim. Suppose that $\calS$ is a $(2,Q)$-spanning set of $(\calA,G)$. Then $P(\alpha)=\calA$ for every $\alpha\in\calS$ by formula~\eqref{eq:quasi-partition-a}. Thus $W_2(\calA,G)\subset\calS$ follows from formula~\eqref{eq:quasi-partition-a}. So the claim holds for $m=2$. Assume that the claim holds for $2\leq i\leq m-1$. Let $\calS$ be an $(m,Q)$-spanning set of $(\calA,Q)$. Then $\calS'=\{\alpha|_{[0,m-2]}:\alpha\in\calS\}$ is an $(m-1,Q)$-spanning set and $W_{m-1}(\calA,G)\subset\calS'$. Hence $W_m(\calA,G)\subset\calS$ by formula~\eqref{eq:quasi-parition-b}. So the claim holds for every $m\geq2$.

Combining Claim 1 with Claim 2 yields $r_{inv}(m,Q,\calA,G)=N(W_m(\calA,G))$. It follows that
\begin{align*}
h_{inv}(\calA,G)&=\lim_{m\to\infty}\frac{1}{m}\log N(W_m(\calA,G))\\
&=\lim_{m\to\infty}\frac{1}{m}\log \big(\sharp\calA\max_{\alpha\in W_m(\calA,G)}\prod_{i=0}^{m-2}D(\alpha(i))\big)\\
&=\lim_{m\to\infty}\frac{1}{m}\max_{\alpha\in W_m(\calA,G)}\sum_{i=0}^{m-2}w(\alpha(i)).
\end{align*}

\end{proof}

\begin{thm}\label{thm:invariant-partition-log-weight}
Let $\Sigma=(X,U,F)$ be a system, $Q\subset X$ a controlled invariant set, and $(\calA,G)$ a quasi-invariant-partition of $Q$. Then
\[h_{inv}(\calA,G)=\overline{w}^*(\calA,G),\]
where $\overline{w}^*(\calA,G)$ is the maximum mean weight.
\end{thm}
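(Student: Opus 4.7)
The plan is to apply Theorem~\ref{thm:IFE-of-quasi-equals-log-sequences}, which already reduces $h_{inv}(\calA,G)$ to the asymptotic growth rate
\[\lim_{m\to\infty} \frac{1}{m} \max_{\alpha \in W_m(\calA,G)} \sum_{i=0}^{m-2} w(\alpha(i)),\]
and then to identify this quantity with $\overline{w}^*(\calA,G)$ via a classical argument about maximum-mean vertex-weighted cycles in the finite directed graph $\Gamma$ on $\calA$ with an edge $A\to B$ whenever $B\in D(A)$. Note that $w(A)=\log \sharp D(A)\geq 0$ for every $A$, since $F(A,G(A))\subset Q$ must intersect the cover; set $W:=\max_{A\in\calA}w(A)$ and $N:=\sharp\calA$.

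For the lower bound, given any irreducible periodic sequence $c=(A_0,\ldots,A_{k-1})$ of period $k$, the $n$-fold concatenation $c^n$ belongs to $W_{nk}(\calA,G)$, so
\[\max_{\alpha\in W_{nk}(\calA,G)}\sum_{i=0}^{nk-2}w(\alpha(i))\;\geq\;\sum_{i=0}^{nk-2}w(A_{i\bmod k})\;=\;nk\,\overline{w}(c)-w(A_{k-1}).\]
Dividing by $nk$ and letting $n\to\infty$ shows that the limit above is at least $\overline{w}(c)$; taking the supremum over all irreducible periodic sequences gives $\overline{w}^*(\calA,G)$.

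For the upper bound, I would use a greedy cycle-extraction procedure. Given $\alpha=(A_0,\ldots,A_{m-1})\in W_m(\calA,G)$, locate the smallest index $j$ such that $A_j$ has appeared earlier; say $A_j=A_i$ with $i<j$. By minimality of $j$, the entries $A_i,A_{i+1},\ldots,A_{j-1}$ are pairwise distinct, and since $\alpha$ is admissible in $\Gamma$ and $A_j=A_i$, the block $(A_i,\ldots,A_{j-1})$ is an irreducible periodic sequence of period $j-i$. Excising the indices $i+1,\ldots,j$ from $\alpha$ leaves a still-admissible shorter walk (the transition $A_i\to A_{j+1}$ is legal because $A_j=A_i$). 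Iterating until no repetition remains decomposes $\{0,\ldots,m-1\}$ into the support of a simple residual path of length $\ell\leq N$ together with the supports of irreducible cycles $c_1,\ldots,c_r$ of total length $m-\ell$. Since each $\overline{w}(c_s)\leq\overline{w}^*(\calA,G)$,
\[\sum_{i=0}^{m-2}w(\alpha(i))\;\leq\;\sum_{i=0}^{m-1}w(\alpha(i))\;\leq\;\ell W+(m-\ell)\,\overline{w}^*(\calA,G)\;\leq\;NW+m\,\overline{w}^*(\calA,G).\]
Dividing by $m$ and letting $m\to\infty$ completes the argument.

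The main obstacle is the bookkeeping for the iterative excision: one must verify carefully that after each step the remaining sequence is still an admissible walk in $\Gamma$, that each extracted block is genuinely an \emph{irreducible} periodic sequence in the sense of the paper's definition (all vertices pairwise distinct and $c^\infty$ admissible for $(\calA,G)$), and that the cycles and the residual simple path together exhaust the index set $\{0,\ldots,m-1\}$. Once this decomposition is formalized, the remaining inequalities are routine.
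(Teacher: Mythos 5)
Your proposal is correct and follows essentially the same route as the paper: the lower bound by concatenating an irreducible periodic sequence and invoking Theorem~\ref{thm:IFE-of-quasi-equals-log-sequences}, and the upper bound by iteratively extracting irreducible cycles (the paper phrases this via the pigeonhole principle) until only a residual of length at most $\sharp\calA$ remains, contributing an additive $O(1)$ term. Your greedy first-repetition excision, including the check that the shortened walk stays admissible because $A_j=A_i$, is exactly the bookkeeping the paper's argument relies on, so there is no substantive difference.
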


\begin{proof}
We first show that $h_{inv}(\calA,G)\geq \overline{w}^*(\calA,G)$.
Suppose that $c=(A_i)_{i=0}^{k-1}$ is an irreducible sequence of period $k$. For any $m\in\N$, let
\[\beta_{c,m}:=\underbrace{A_0\cdots A_{k-1}\cdots A_0\cdots A_{k-1}}_{m}A_0.\]
Then $\beta_{c,m}\in W_{mk+1}(\calA,G)$. Utilizing Theorem~\ref{thm:IFE-of-quasi-equals-log-sequences}, we have
\begin{align*}
h_{inv}(\calA,G)&=\lim_{m\to\infty}\frac{1}{mk+1}\max_{\alpha\in W_{mk+1}(\calA,G)}\sum_{i=0}^{mk-1}w(\alpha(i))\\
&\geq \lim_{m\to\infty}\frac{1}{mk+1}\sum_{i=0}^{mk-1}w(\beta_{c,m}(i))\\
&=\lim_{m\to\infty}\frac{m}{mk+1}\sum_{i=0}^{k-1}w(A_i)\\
&=\frac{1}{k}\sum_{i=0}^{k-1}w(A_i)=\overline{w}(c).
\end{align*}
The desired inequality immediately follows from the arbitrariness of $c$.

We now show the inverse inequality. For any $m\geq\sharp\calA+3$ and $\alpha_1\in W_m(\calA,G)$, we have $\alpha_1(0)\alpha_1(1)\cdots\alpha_1(m-2)\in W_{m-1}(\calA,G)$. Using the pigeonhole principle, we can pick an irreducible sequence $c_1=(A_{1,i})_{i=0}^{k_1-1}$ of period $k_1$ in $(\calA,G)$ and  there exists $p_1\in[0,\sharp\calA]$ such that \[\alpha_1(p_1)\alpha_1(p_1+1)\cdots\alpha_1(p_1+k_1)=A_{1,0}A_{1,1}\cdots A_{1,k_1-1}.\]
We thus have
\[w(\alpha_1(p_1))+w(\alpha_1(p_1+1))+\cdots w(\alpha_1(p_1+k_1-1))=k_1\overline{w}(c_1)\leq k_1\overline{w}^*(\calA,G).\]
Let
\[\alpha_2=\alpha_1(0)\cdots\alpha_1(p_1-1)\alpha_1(p_1+k_1)\cdots\alpha_1(m-2).\]
Clearly, $\alpha_2\in W_{m-k_1-1}(\calA,G)$. Applying the pigeonhole principle repeatedly, we can find a sequence of irreducible sequences of period $\{c_j\}_{j=1}^q$ in $(\calA,G)$, a sequence $\{\alpha_{j+1}\}_{j=1}^{q}$ with $\alpha_{j+1}\in W_{m-\sum_{i=1}^jk_i-1}(\calA,G)$ and two sequence numbers $\{k_j\}_{j=1}^q$ with $l(c_j)=k_j$ and $\{p_j\}_{j=1}^q$ with $p_j\in[0,\sharp\calA]$ such that
\[\alpha_j(p_j)\alpha_j(p_j+1)\cdots\alpha_j(p_j+k_j)=A_{j,0}A_{j,1}\cdots A_{j,k_j},\]
\[w(\alpha_j(p_j))+w(\alpha_j(p_j+1))+\cdots w(\alpha_j(p_j+k_j-1))=k_j\overline{w}(c_j)\leq k_j\overline{w}^*(\calA,G),\]
and $\alpha_{q+1}\in W_{m-\sum_{j=1}^qk_j-1}(\calA,G)$,
where $m-\sum_{j=1}^qk_j-1\in[0,\sharp\calA]$.
It is convenient to write $L=m-\sum_{j=1}^qk_j-1$. Then
\begin{align*}
\sum_{i=0}^{m-2}w(\alpha(i))
&=\sum_{j=1}^q\sum_{i=0}^{k_j-1}w(\alpha_j(p_j+i))+\sum_{i=0}^{L-1}w(\alpha_{q+1}(i))\\
&\leq\sum_{j=1}^qk_j\overline{w}^*(\calA,G)+L\max_{A\in\calA}w(A)\\
&=(m-1-L)\overline{w}^*(\calA,G)+L\max_{A\in\calA}w(A)\\
&\leq(m-1)\overline{w}^*(\calA,G)+\sharp\calA\max_{A\in\calA}w(A).
\end{align*}
Therefore,
\begin{align*}
h_{inv}(\calA,G)&=\lim_{m\to\infty}\frac{1}{m}\max_{\alpha\in W_m(\calA,G)}\sum_{i=0}^{m-2}w(\alpha(i))\\
&\leq\lim_{m\to\infty}\frac{m-1}{m}\overline{w}^*(\calA,G)
+\lim_{m\to\infty}\frac{1}{m}\sharp\calA\max_{A\in\calA}w(A)\\
&=\overline{w}^*(\calA,G).
\end{align*}
This completes the proof.
\end{proof}

\begin{rem}
Passing $(\calA,G)$ to an invariant partition of $Q$, we recover Theorem 1 in~\cite{Tomar2020Numerical}.
\end{rem}

\begin{cor}\label{cor:quasi-par-bigger-equal-par}
Let $\Sigma=(X,U,F)$ be a system and $Q\subset X$ be controlled invariant. For every quasi-invariant-partition $(\calA,G)$ of $Q$, there exists an invariant partition $(\calA',G')$ of $Q$ such that $\sharp\calA=\sharp\calA'$ and $h_{inv}(\calA',G')\leq h_{inv}(\calA,G)$.
\end{cor}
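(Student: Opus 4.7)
The plan is to ``disjointify'' the quasi-invariant-partition $(\calA,G)$ into an honest partition of $Q$ with the same number of pieces, inheriting the control map, and then compare entropies through Theorem~\ref{thm:invariant-partition-log-weight} by comparing maximum mean weights. Property~\eqref{eq:quasi-partition-a} is exactly what prevents any piece from collapsing during the disjointification, so the count $\sharp\calA'=\sharp\calA$ is preserved.

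\textbf{Construction.} Enumerate $\calA=\{A_1,\ldots,A_n\}$. By~\eqref{eq:quasi-partition-a}, I fix for each $i$ a ``private'' point $x_i\in A_i\setminus\bigcup_{j\neq i}A_j$. Define $\phi:Q\to\{1,\ldots,n\}$ by $\phi(x_i):=i$ and, for every other $x\in Q$, $\phi(x):=$ any index $i$ with $x\in A_i$ (such an index exists since $\calA$ covers $Q$). Setting $A'_i:=\phi^{-1}(i)$ and $G'(A'_i):=G(A_i)$ produces a partition $\calA'=\{A'_1,\ldots,A'_n\}$ of $Q$ into nonempty sets with $A'_i\subset A_i$, so $\sharp\calA'=\sharp\calA=n$. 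Since $F(A'_i,G'(A'_i))\subset F(A_i,G(A_i))\subset Q$, the pair $(\calA',G')$ is an invariant cover; being a partition, it is an invariant partition and hence a quasi-invariant-partition.

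\textbf{Entropy comparison.} I will show that the bijection $A'_j\leftrightarrow A_j$ induces an injection $D_{\calA'}(A'_i)\hookrightarrow D_\calA(A_i)$ for each $i$: if $F(A'_i,G'(A'_i))\cap A'_j\neq\emptyset$ then this intersection lies in $F(A_i,G(A_i))\cap A_j$, so $A_j\in D_\calA(A_i)$. Consequently $w_{\calA'}(A'_i)\leq w_\calA(A_i)$ for every $i$. The same bijection carries any irreducible periodic sequence $c'=(A'_{j_0},\ldots,A'_{j_{k-1}})$ for $(\calA',G')$ to an irreducible periodic sequence $c=(A_{j_0},\ldots,A_{j_{k-1}})$ for $(\calA,G)$ of the same period (distinct entries map to distinct entries, and admissibility is preserved by the same inclusion), and therefore $\overline{w}(c')\leq\overline{w}(c)\leq\overline{w}^*(\calA,G)$. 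Taking the maximum yields $\overline{w}^*(\calA',G')\leq\overline{w}^*(\calA,G)$, and applying Theorem~\ref{thm:invariant-partition-log-weight} to both quasi-invariant-partitions gives $h_{inv}(\calA',G')\leq h_{inv}(\calA,G)$.

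The only real subtlety is ensuring that no class $A'_i$ collapses to the empty set, which would break both the partition requirement and the cardinality constraint $\sharp\calA'=\sharp\calA$; this is exactly what the private points $x_i$ supplied by~\eqref{eq:quasi-partition-a} guarantee. Property~\eqref{eq:quasi-parition-b} of $(\calA,G)$ is not needed directly in the argument---it is consumed only when Theorem~\ref{thm:invariant-partition-log-weight} is applied to $(\calA,G)$, and here that theorem is invoked as a black box.
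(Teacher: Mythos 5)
Your proof is correct and follows essentially the same route as the paper: disjointify $\calA$ into a partition $\calA'$ with $A_i'\subset A_i$ (the paper uses $A_j'=A_j\setminus\bigcup_{i<j}A_i$, you use an assignment map anchored by the private points from~\eqref{eq:quasi-partition-a}; both rely on~\eqref{eq:quasi-partition-a} for nonemptiness), keep $G'(A_i')=G(A_i)$, and then lift an optimal irreducible periodic sequence of $(\calA',G')$ to one of $(\calA,G)$ and compare mean weights via Theorem~\ref{thm:invariant-partition-log-weight}. Your write-up is in fact slightly more explicit than the paper's about why no class collapses and why the weights only decrease.
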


\begin{proof}
Let $\calA=\{A_1,\ldots,A_p\}$. Define sets $A_1',\ldots,A_p'$ by $A_1'=A_1$, $A_j'=A_j\setminus\cup_{i=1}^{j-1}A_i$, for any $2\leq j\leq p$, and $G'(A_j'):=G(A_j)$, $j=1,\ldots,p$. Then $(\calA',G')$ is an invariant partition of $Q$. Suppose $c'=(A_i')_{1}^{q-1}$ is an irreducible periodic sequence in $(\calA',G')$ so that $h_{inv}(\calA',G')=\overline{w}(c')$. Since $A_i'\subset A_i$ for any $1\leq i\leq p$, it follows that $c:=(A_i)_{1}^{q-1}$ is an irreducible periodic sequence in $(\calA,G)$ and $\overline{w}(c')\leq\overline{w}(c)$; hence, we have by Theorem~\ref{thm:invariant-partition-log-weight} $h_{inv}(\calA,G)\geq\overline{w}(c)\geq h_{inv}(\calA',G')$.
\end{proof}

In the following example, we construct a system that has a quasi-invariant-partition $(\calA_1,G_1)$, where we find two invariant partitions $(\calA_2,G_2)$ and $(\calA_3,G_3)$ such that
\[h_{inv}(\calA_2,G_2)= h_{inv}(\calA_1,G_1)~\text{and}~h_{inv}(\calA_3,G_3)< h_{inv}(\calA_1,G_1).\]

\begin{exam}
Let $\Sigma=(X,U,F)$ be a system, where $X=\{0,1,2,3\}$ and $U=\{a,b\}$. The transition function $F$ is illustrated by
\begin{figure}[H]
  \centering
\begin{tikzpicture}[scale=0.5]
\tikzset{
  LabelStyle/.style = {text = black, font = \bfseries,fill=white },
  VertexStyle/.style = { fill=white, draw=black,circle,font = \bfseries},
  EdgeStyle/.style = {-latex, bend left} }
  \SetGraphUnit{5}
  \Vertex{1}
  \WE(1){0}
  \EA(1){2}
  \tikzset{VertexStyle/.style = { fill=white, draw=black,circle,font = \bfseries,dashed}}
  \EA(2){3}
  \tikzset{EdgeStyle/.style = {-latex, bend left=50}}
  \Edge[label = a](0)(2)
  \tikzset{EdgeStyle/.style = {-latex, bend left}}
  \Edge[label = a](0)(1)
  \Edge[label = b](2)(0)
  \tikzset{EdgeStyle/.style = {-latex}}
  \Edge[label = a](1)(0)
  \Edge[label = b](1)(2)
  \tikzset{EdgeStyle/.style = {-latex,dashed}}
  \Edge[label = a](2)(3)
  \tikzset{EdgeStyle/.style = {-latex,bend right,dashed}}
  \Edge[label = b](0.south)(3)
  \Loop[dist = 4cm,dir=SO,label ={a,b},style={-latex,thick}](3.east)
\end{tikzpicture}
 \caption{}\label{fig:inv-par-less-equal-quasi}
\end{figure}
The set of interest is $Q:=\{0,1,2\}$. Let $A_{11}=\{0,1\}$, $A_{12}=\{1,2\}$, $A_{21}=\{2\}$, $A_{31}=\{0\}$, $A_{32}=\{1\}$, and $A_{33}=\{2\}$; and set $\calA_1=\{A_{11},A_{12}\}$, $\calA_2=\{A_{11},A_{21}\}$, and $\calA_3=\{A_{31},A_{32},A_{33}\}$. Define $G_1:\calA_1\to U$ by $G_1(A_{11})=a$ and $G_1(A_{12})=b$; $G_2:\calA_2\to U$ by $G_2(A_{11})=a$ and $G_2(A_{21})=b$; and $G_3:\calA_3\to U$ by $G_3(A_{31})=a$, $G_3(A_{32})$=a, and $G_3(A_{33})=b$. Then
\[h_{inv}(\calA_1,G_1)=h_{inv}(\calA_2,G_2)=1,~h_{inv}(\calA_3,G_3)=\frac12.\]
\end{exam}

\begin{proof}
By definition, $(\calA_1,G_1)$ is a quasi-invariant-partition, and $(\calA_2,G_2)$ and $(\calA_3,G_3)$ are invariant partitions.
We now compute entropy for $(\calA_1,G_1)$, $(\calA_2,G_2)$, and $(\calA_3,G_3)$.
From Fig.~\ref{fig:inv-par-less-equal-quasi}, we see $A_{11}=\{0,1\}$ is an irreducible sequence of period $1$ for both $(\calA_1,G_1)$ and $(\calA_2,G_2)$ and $\overline{w}(A_{11})=1$. Since $h_{inv}(\calA_1,G_1)\leq1$ and $h_{inv}(\calA_2,G_2)\leq1$, it follows from Theorem~\ref{thm:invariant-partition-log-weight} that $h_{inv}(\calA_1,G_1)=1$ and $h_{inv}(\calA_2,G_2)=1$. Fig.~\ref{fig:inv-par-less-equal-quasi} also tells us that $(\calA_3,G_3)$ only has irreducible sequences of period $2$: $A_{31}A_{32}$, $A_{31}A_{33}$, $A_{32}A_{31}$, and $A_{33}A_{31}$. Applying Theorem~\ref{thm:invariant-partition-log-weight}, an easy computation shows that $h_{inv}(\calA_3,G_3)=\frac12$.
\end{proof}

Theorem~\ref{thm:invariant-partition-log-weight} states that the entropy for a quasi-invariant-partition is equal to its maximum mean weight. From a numerical point of view, we shall give lower and upper bounds for entropy of quasi-invariant-partitions. In some special case, we can obtain the entropy for a quasi-invariant-partition by the logarithm
of the spectral radius of its weighted adjacency matrix.

\begin{thm}\label{thm:upper-lower-bounds-partition}
Let $\Sigma=(X,U,F)$ be a system and $Q\subset X$. If $(\calA,G)$ is a quasi-invariant-partition of $Q$, then
\[\log \rho(W_{\calA,G})-\log \rho(M_{\calA,G})\leq h_{inv}(\calA,G)\leq\min\{\log \|W_{\calA,G}\|_\infty,\log \rho(W_{\calA,G})\},\]
where $\rho(W_{\calA,G})$ is the spectral radius of $W_{\calA,G}$ (the maximum of absolute values of its eigenvalues).
Particularly, if $\rho(M_{\calA,G})=1$, then $h_{inv}(\calA,G)=\log \rho(W_{\calA,G})$.
\end{thm}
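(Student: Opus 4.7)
The plan is to convert the theorem into a matrix-theoretic statement by invoking Theorem~\ref{thm:IFE-of-quasi-equals-log-sequences}, which for a quasi-invariant-partition $(\calA,G)$ reduces $h_{inv}(\calA,G)$ to the growth rate
\[
h_{inv}(\calA,G)=\lim_{m\to\infty}\frac{1}{m}\log P(m),\qquad P(m):=\max_{\alpha\in W_m(\calA,G)}\prod_{i=0}^{m-2}\sharp D(\alpha(i)).
\]
The key observation is that every admissible sequence $\alpha=A_0A_1\cdots A_{m-1}$ is exactly a path in the digraph encoded by $M_{\calA,G}$, and along such a path one has $W_{A_iA_{i+1}}=\sharp D(A_i)$. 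Hence the bookkeeping identity
\[
(W_{\calA,G}^{m-1})_{A_0A_{m-1}}=\sum_{A_0\to\cdots\to A_{m-1}}\prod_{i=0}^{m-2}\sharp D(A_i),
\]
with the sum taken over admissible paths, connects $P(m)$ with the powers of $W_{\calA,G}$ and $M_{\calA,G}$.

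For the upper bounds, the estimate $\sharp D(A_i)\leq \|W_{\calA,G}\|_\infty$ applied termwise gives $P(m)\leq \|W_{\calA,G}\|_\infty^{m-1}$, yielding $h_{inv}(\calA,G)\leq\log\|W_{\calA,G}\|_\infty$. For the spectral upper bound I drop the ``max path'' in favor of the full sum to get
\[
P(m)\leq\sum_{A_0,A_{m-1}\in\calA}(W_{\calA,G}^{m-1})_{A_0A_{m-1}}=\mathbf{1}^{T}W_{\calA,G}^{m-1}\mathbf{1},
\]
and apply Gelfand's formula for the non-negative matrix $W_{\calA,G}$ (together with the fact that $\mathbf{1}^{T}W^{m-1}\mathbf{1}$ is sandwiched between multiples of any matrix norm of $W^{m-1}$) to conclude $\frac{1}{m}\log P(m)\to\log\rho(W_{\calA,G})$ is an upper bound.

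For the lower bound, I turn the comparison around: each of the $(M_{\calA,G}^{m-1})_{A_0A_{m-1}}$ admissible paths from $A_0$ to $A_{m-1}$ contributes at most $P(m)$ to the weighted count, so
\[
(W_{\calA,G}^{m-1})_{A_0A_{m-1}}\leq(M_{\calA,G}^{m-1})_{A_0A_{m-1}}\cdot P(m).
\]
Summing both sides over $(A_0,A_{m-1})$ gives $\mathbf{1}^{T}W_{\calA,G}^{m-1}\mathbf{1}\leq\mathbf{1}^{T}M_{\calA,G}^{m-1}\mathbf{1}\cdot P(m)$, hence
\[
\frac{1}{m}\log P(m)\geq\frac{1}{m}\log\mathbf{1}^{T}W_{\calA,G}^{m-1}\mathbf{1}-\frac{1}{m}\log\mathbf{1}^{T}M_{\calA,G}^{m-1}\mathbf{1}.
\]
Letting $m\to\infty$ and again invoking Gelfand's formula (for both $W_{\calA,G}$ and $M_{\calA,G}$) produces $h_{inv}(\calA,G)\geq\log\rho(W_{\calA,G})-\log\rho(M_{\calA,G})$. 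The special case $\rho(M_{\calA,G})=1$ then forces equality, since the upper and lower bounds collapse to $\log\rho(W_{\calA,G})$.

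The main obstacle I expect is the careful justification that $\frac{1}{m}\log\mathbf{1}^{T}A^{m-1}\mathbf{1}\to\log\rho(A)$ for a non-negative matrix $A$ even when $A$ is reducible or has $\mathbf{1}^{T}A^{m-1}\mathbf{1}=0$ for small $m$; this requires pointing out that, for non-negative matrices, the quantity $\mathbf{1}^{T}A^{m-1}\mathbf{1}$ is equivalent (up to the factor $(\sharp\calA)^{2}$) to any submultiplicative matrix norm of $A^{m-1}$, so Gelfand's limit applies. A smaller technical point is keeping the off-by-one between the exponent $m-1$ in the matrix power and the denominator $m$ in the entropy limit harmless, which it is since $\frac{m-1}{m}\to1$.
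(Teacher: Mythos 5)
Your proposal is correct and follows essentially the same route as the paper: both reduce $h_{inv}(\calA,G)$ to the growth rate of the maximal path product via Theorem~\ref{thm:IFE-of-quasi-equals-log-sequences}, bound that product above by entries of $W_{\calA,G}^{m-1}$ and below via $(W_{\calA,G}^{m-1})_{AB}\leq (M_{\calA,G}^{m-1})_{AB}\cdot P(m)$, and finish with the Gelfand-type limit $\|A^{k}\|^{1/k}\to\rho(A)$. The only cosmetic difference is that you use the sum-of-entries quantity $\mathbf{1}^{T}A^{m-1}\mathbf{1}$ where the paper uses the max-entry norm $\|A^{m-1}\|_\infty$; these are equivalent up to the factor $(\sharp\calA)^2$ you already note, so the argument is the same.
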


\begin{proof}
We first show the right hand side inequality. It is clear that $h_{inv}(\calA,G)\leq\log \|W_{\calA,G}\|_\infty$. Since $(\calA,G)$ is a quasi-invariant-partition, it follows from the proof of Theorem~\ref{thm:IFE-of-quasi-equals-log-sequences} that
\[r_{inv}(n,Q,\mathcal{A},G)=\sharp\calA\cdot\max_{\alpha\in W_n(\calA,G)}\prod_{i=0}^{n-2}\sharp D(\alpha_i).\]
For any $\alpha\in W_n(\calA,G)$, we have
\[\prod_{t=0}^{n-2}\sharp D(\alpha_t)=W_{\alpha_0\alpha_1}\cdot W_{\alpha_1\alpha_2}\cdots W_{\alpha_{n-2}\alpha_{n-1}}\leq \|W_{\calA,G}^{n-1}\|_\infty.\]
This implies that
\[r_{inv}(n,Q,\mathcal{A},G)\leq\sharp\calA\cdot\|W_{\calA,G}^{n-1}\|_\infty.\]
Hence,
\begin{align*}
h_{inv}(\calA,G)&=\lim_{n\to\infty}\frac{1}{n}\log r_{inv}(n,Q,\calA,G)\\
&\leq\lim_{n\to\infty}\frac{1}{n}\log \sharp\calA\cdot\|W_{\calA,G}^{n-1}\|_\infty\\
&=\lim_{n\to\infty}\frac{n-1}{n}\log \|W_{\calA,G}^{n-1}\|_\infty^{\frac1{n-1}}.
\end{align*}
Employing Theorem 5.7.10 in~\cite{Horn-Johnson2013}, we obtain
$h_{inv}(\calA,Q)\leq\log \rho(W_{\calA,G})$.

We now show the left hand side inequality holds.
For any $\beta\in W_n(\calA,G)$, we have
\[\prod_{t=0}^{n-2}\sharp D(\beta_i)=W_{\beta_0\beta_1}\cdot W_{\beta_1\beta_2}\cdots W_{\beta_{n-2}\beta_{n-1}}\]
and
\[M_{\beta_0\beta_1}\cdot M_{\beta_1\beta_2}\cdots M_{\beta_{n-2}\beta_{n-1}}=1.\]
Then
\[W_{\beta_0\beta_{n-1}}\leq M_{\beta_0\beta_{n-1}}\cdot\max_{\alpha\in W_n(\calA,G)}\prod_{i=0}^{n-2}\sharp D(\alpha_i)\leq \|M_{\calA,G}^{n-1}\|_\infty\cdot\max_{\alpha\in W_n(\calA,G)}\prod_{i=0}^{n-2}\sharp D(\alpha_i).\]
It follows that
\[\|W_{\calA,G}^{n-1}\|_\infty\leq \|M_{\calA,G}^{n-1}\|_\infty\cdot\max_{\alpha\in W_n(\calA,G)}\prod_{i=0}^{n-2}\sharp D(\alpha_i).\]
Thus
\begin{align*}
\lim_{n\to\infty}\frac{1}{n}\log r_{inv}(n,Q,\mathcal{A},G)&\geq\lim_{n\to\infty}\frac{1}{n}\log \sharp\calA\cdot\frac{\|W_{\calA,G}^{n-1}\|_\infty}{\|M_{\calA,G}^{n-1}\|_\infty}\\
&=\log \rho(W_{\calA,G})-\log \rho(M_{\calA,G}).
\end{align*}
\end{proof}

\begin{rem}
(1) Since the norm $\|\cdot\|_\infty$ is not spectrally dominant, that is, there exists a matrix $M$ such that $\|M\|_\infty<\rho(M)$, we take the minimum of $\log \|W_{\calA,G}\|_\infty$ and $\log \rho(W_{\calA,G})$ in the right hand side of the inequality in Theorem~\ref{thm:upper-lower-bounds-partition}. See Exa.~\ref{exa:inv-fb-exa}.

(2) If $(\calA,G)$ is a quasi-invariant-partition of $Q$, then
\begin{itemize}
  \item[(\romannumeral1)]$\rho(M_{\calA,G})\geq1$, which implies that $\log \rho(M_{\calA,G})\geq0$;
      \begin{itemize}
        \item Since for any $A\in\calA$ there exists $B\in\calA$ so that $M_{AB}=1$, we have
            \[\|M_{\calA,G}^n\|_\infty\geq1,~\forall~n\in\N.\]
            Then $\rho(M_{\calA,G})\geq1$.
      \end{itemize}
  \item[(\romannumeral2)]$\rho(W_{\calA,G})\geq\rho(M_{\calA,G})$, which means that $\log \rho(W_{\calA,G})-\log \rho(M_{\calA,G})\geq0$.
      \begin{itemize}
        \item Since $W_{AB}\geq M_{AB}$, $W_{AB}=\sharp D(A)M_{AB}$, and $M_{AB}=1$ for any $A,B\in\calA$, we have
            \[\|W_{\calA,G}^n\|_\infty\geq \min_{A\in\calA}\{(\sharp D(A))^{n-1}\}\cdot\|M_{\calA,G}^n\|_\infty,~\forall~n\in\N.\]
            It therefore follows that
            \[\log \rho(W_{\calA,G})-\log \rho(M_{\calA,G})\geq \min_{A\in\calA}\{w(A)\}.\]
      \end{itemize}
        \item[(\romannumeral3)] If $\rho(M_{\calA,G})>1$, then we can use Theorem~\ref{thm:invariant-partition-log-weight} to compute the entropy of $(\calA,G)$. On the other hand, if $\rho(M_{\calA,G})=1$, then the entropy of $(\calA,G)$ is $\log \rho(W_{\calA,G})$. See Exa.~\ref{exa:inv-fb-exa}.
\end{itemize}

\end{rem}

\subsection{Calculation of invariance entropy and IFE for some control systems}

Let $\Sigma=(X,U,F)$ be a system, $Q\subset X$, and $V\subset U$. We say that $V$ is a \emph{cover} of $Q$ if $Q\subset \cup_{a\in V}Q_a$, where $Q_a=\{x\in Q| F(x,a)\subset Q\}$. The \emph{admissible matrix} $M_{Q,V}=(M_{ab})_{a,b\in V}$ of $Q$ with respect to $V$ is defined by
\[M_{ab}=\left\{
           \begin{array}{ll}
             1, & \exists~x\in Q_a~s.t.~F(x,a)\cap Q_b\neq\emptyset; \\
             0, & otherwise.
           \end{array}
         \right.
\]

Recall that the $l_1$ norm of an $n\times n$ matrix $M$ is
$\|M\|_1=\sum_{i,j=1}^n|a_{ij}|$.

\begin{prop}
Let $\Sigma=(X,U,F)$ be a system, $Q\subset X$ a controlled invariant set, and $V\subset U$ a cover of $Q$. Then
$h_{inv}(Q)\leq \log \rho(M_{Q,V})$.
\end{prop}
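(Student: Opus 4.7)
The plan is to exhibit an explicit $(n,Q)$-spanning set whose size is bounded by the entry-wise $\ell_1$-norm of a power of $M_{Q,V}$, and then to pass to the exponential growth rate via Gelfand's spectral radius formula. Concretely, define $\scrS_n\subset V^n$ to be the set of words $\omega=(a_0,\ldots,a_{n-1})$ whose consecutive letters satisfy $M_{a_ia_{i+1}}=1$ for every $0\leq i\leq n-2$; that is, the length-$n$ walks in the digraph associated to $M_{Q,V}$.

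First I would verify that $\scrS_n$ is an $(n,Q)$-spanning set. Fix $x\in Q$. Since $V$ covers $Q$ there exists $a_0\in V$ with $x\in Q_{a_0}$. Build a tree $S_x$ inductively: given a branch $\omega$ of length $i+1$ already in the tree with associated state $I_\omega^i(x)\subset Q_{\omega_i}$, put
\[V_\omega^i:=\{a\in V:Q_a\cap F(I_\omega^i(x),\omega_i)\neq\emptyset\}\]
and attach children $\omega a$ for every $a\in V_\omega^i$. Because $F$ is strict and $I_\omega^i(x)\subset Q_{\omega_i}$, the image $F(I_\omega^i(x),\omega_i)$ is a nonempty subset of $Q$, so $V$ covering $Q$ forces $V_\omega^i\neq\emptyset$ and $F(I_\omega^i(x),\omega_i)\subset\bigcup_{a\in V_\omega^i}Q_a$. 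Hence by construction each $I_\omega^{i+1}(x)=F(I_\omega^i(x),\omega_i)\cap Q_{\omega_{i+1}}$ is nonempty, $I_\omega^n(x)\subset Q$, and condition (b) of an admissible family holds; so $S_x\in AF^n(Q)$ and $x\in Q_{S_x}$. For each edge of $S_x$, any $y\in I_\omega^i(x)\subset Q_{\omega_i}$ with $F(y,\omega_i)\cap Q_{\omega_{i+1}}\neq\emptyset$ (one exists because $I_\omega^{i+1}(x)\neq\emptyset$) witnesses $M_{\omega_i\omega_{i+1}}=1$, hence $S_x\subset\scrS_n$.

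Next, the number of length-$n$ walks in the digraph of $M_{Q,V}$ is $\sum_{a,b\in V}(M_{Q,V}^{n-1})_{ab}=\|M_{Q,V}^{n-1}\|_1$, so
\[r_{inv}(n,Q)\leq\sharp\scrS_n=\|M_{Q,V}^{n-1}\|_1.\]
Applying Gelfand's formula on the finite-dimensional matrix algebra indexed by $V$ (all norms being equivalent there) gives $\lim_{n\to\infty}\frac{1}{n}\log\|M_{Q,V}^{n-1}\|_1=\log\rho(M_{Q,V})$, so taking $\limsup$ yields $h_{inv}(Q)\leq\log\rho(M_{Q,V})$.

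The main friction is to verify simultaneously the three clauses required by an admissible family while keeping the fan-out inside $\scrS_n$: non-emptiness of $I_\omega^{i+1}(x)$, the covering $F(I_\omega^i(x),\omega_i)\subset\bigcup_{\omega'}Q_{\omega'_{i+1}}$, and matrix-compatibility $M_{\omega_i\omega_{i+1}}=1$. All three follow from the single loop invariant $I_\omega^i(x)\subset Q_{\omega_i}$ combined with the covering property of $V$, but the inductive bookkeeping must be stated carefully to ensure the same tree is used to witness every clause.
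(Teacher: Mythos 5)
Your proof is correct and follows essentially the same route as the paper: build the spanning set from length-$n$ walks in the digraph of $M_{Q,V}$, bound $r_{inv}(n,Q)$ by $\|M_{Q,V}^{n-1}\|_1$, and conclude via Gelfand's formula. Your explicit tree construction verifying the admissible-family clauses is a detail the paper leaves implicit (it simply asserts the existence of a spanning set in $V^n$ whose words are walks), but it is the same argument.
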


\begin{proof}
Since $V$ is a cover of $Q$, we can pick an $(n,Q)$-spanning set $\scrS\subset V^{n}$ for every $n\geq2$. For every $u\in\scrS$, we have $M_{u_0u_1}\cdot M_{u_1u_2}\cdots M_{u_{n-2}u_{n-1}}=1$. Thus $r_{inv}(n,Q)\leq\sharp\scrS\leq \|M_{Q,V}^{n-1}\|_1$. This implies that
\begin{align*}
h_{inv}(Q)&=\limsup_{n\to\infty}\frac{1}{n}\log r_{inv}(n,Q)\\
&\leq\limsup_{n\to\infty}\frac{1}{n}\log \|M_{Q,V}^{n-1}\|_1\\
&=\limsup_{n\to\infty}\frac{n-1}{n}\log \|M_{Q,V}^{n-1}\|_1^{\frac1{n-1}}.
\end{align*}
Using Gelfand formula~\cite[Corollary 5.6.14]{Horn-Johnson2013}, it follows that
$h_{inv}(Q)\leq\log \rho(M_{Q,V})$.
\end{proof}

\begin{cor}
Let $\Sigma=(X,U,F)$ be a system and $Q\subset X$ be controlled invariant. Then
\[h_{inv}(Q)\leq\inf_{V\subset U~\text{covers}~Q}\log \rho(M_{Q,V}).\]
\end{cor}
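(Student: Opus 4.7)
The plan is to deduce this corollary directly from the preceding Proposition, which already supplies the bound $h_{inv}(Q)\leq \log\rho(M_{Q,V})$ for each individual cover $V\subset U$ of $Q$. Since the left-hand side $h_{inv}(Q)$ is a quantity intrinsic to $Q$ and does not depend on the choice of $V$, I would simply take the infimum of the right-hand side as $V$ ranges over all covers of $Q$, which yields the stated inequality.

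Before doing so, I would briefly justify that the set of covers appearing in the infimum is nonempty, so that the infimum is meaningful. Because $Q$ is controlled invariant, for every $x\in Q$ there exists $u\in U$ with $F(x,u)\subset Q$, that is $x\in Q_u$. Consequently $V_0:=\{u\in U:Q_u\neq\emptyset\}$ satisfies $Q\subset\bigcup_{a\in V_0}Q_a$ and therefore is a cover of $Q$ in the sense of the subsection. Thus $\inf_{V}\log\rho(M_{Q,V})$ is an infimum over a nonempty family and the inequality of the corollary is well-posed.

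The only step that needs any thought is the first paragraph above, and it is essentially bookkeeping: apply the Proposition once for each cover $V$, then pass to the infimum. Accordingly, I do not anticipate a genuine obstacle; the substantive work—estimating $r_{inv}(n,Q)$ by $\|M_{Q,V}^{n-1}\|_1$ and invoking Gelfand's formula to obtain the spectral radius bound—has already been carried out in the Proposition. The corollary's proof will therefore be a two- or three-line argument of the form: ``Fix an arbitrary cover $V\subset U$ of $Q$; by the Proposition, $h_{inv}(Q)\leq\log\rho(M_{Q,V})$; taking the infimum over $V$ gives the claim.''
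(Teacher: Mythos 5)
Your proposal is correct and is exactly the argument the paper intends (the corollary is stated without proof precisely because it follows by applying the preceding proposition to each cover $V$ and passing to the infimum). The extra remark on nonemptiness of the family of covers is harmless; note also that if the family were empty the infimum would be $+\infty$ and the inequality would hold trivially.
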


\begin{thm}\label{thm:symbolic-log-radius}
Let $\Sigma=(X,U,F)$ be a system, $Q\subset X$ a controlled invariant set, $V\subset U$ a finite cover of $Q$. If in addition
\begin{itemize}
  \item[(C.1)] $Q_a\cap Q_b=\emptyset$ for distinct $a,b\in V$,
  \item[(C.2)]there exists $K\subset Q_a$ such that $Q_b\subset F(K,a)$ for every $M_{ab}=1$,
  \item[(C.3)] $Q_c=\emptyset$ for every $c\in U\setminus V$.
\end{itemize}
Then
$h_{inv}(Q)=\log \rho(M_{Q,V})$.
\end{thm}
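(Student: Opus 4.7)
The plan is to establish the matching lower bound $h_{inv}(Q)\geq\log\rho(M_{Q,V})$; the reverse inequality is exactly the preceding proposition. My strategy is to show that every $M_{Q,V}$-admissible word $\sigma=a_0a_1\cdots a_{n-1}\in V^n$ (meaning $M_{a_i a_{i+1}}=1$ for $0\leq i\leq n-2$) must belong to every $(n,Q)$-spanning set $\scrS\subset U^n$. Since the number of such words is $\sum_{a,b\in V}(M_{Q,V}^{n-1})_{ab}=\|M_{Q,V}^{n-1}\|_1$, this yields $r_{inv}(n,Q)\geq\|M_{Q,V}^{n-1}\|_1$, and Gelfand's formula (\cite[Corollary 5.6.14]{Horn-Johnson2013}) applied exactly as in the previous proposition gives the desired bound.

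To force a given admissible word $\sigma$ into $\scrS$, I would build a witness point $x_\sigma$ by a backward trajectory using (C.2). Pick any $y_{n-1}\in Q_{a_{n-1}}$ (nonempty because $M_{a_{n-2}a_{n-1}}=1$). For $i=n-2,n-3,\ldots,0$, since $M_{a_i a_{i+1}}=1$, condition (C.2) supplies some $K\subset Q_{a_i}$ with $Q_{a_{i+1}}\subset F(K,a_i)$; in particular there is $y_i\in K\subset Q_{a_i}$ with $y_{i+1}\in F(y_i,a_i)$. Set $x_\sigma:=y_0\in Q_{a_0}$.

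Now fix an $(n,Q)$-spanning set $\scrS$ and pick an admissible family $S\subset\scrS$ with $x_\sigma\in Q_S$; let $c_0$ be its common initial symbol. The admissibility condition at $i=0$ yields $F(x_\sigma,c_0)\subset Q$, so $x_\sigma\in Q_{c_0}$; hence $Q_{c_0}\neq\emptyset$, forcing $c_0\in V$ by (C.3), and then (C.1) applied to $x_\sigma\in Q_{a_0}\cap Q_{c_0}$ forces $c_0=a_0$. The heart of the proof is an induction on $i$ producing $\omega^{(i)}\in S$ with $\omega^{(i)}|_{[0,i]}=a_0\cdots a_i$ and $y_i\in I_{\omega^{(i)}}^i(x_\sigma)$. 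Assuming this at step $i\leq n-2$, the admissibility condition reads
\[
F\bigl(I_{\omega^{(i)}}^i(x_\sigma),\,a_i\bigr)\ \subset\ \bigcup_{\substack{\omega'\in S\\ \omega'|_{[0,i]}=a_0\cdots a_i}}Q_{\omega'_{i+1}}.
\]
Since $y_{i+1}\in F(y_i,a_i)\subset F(I_{\omega^{(i)}}^i(x_\sigma),a_i)$ by the inductive hypothesis, some $\omega'$ in the indexed union satisfies $y_{i+1}\in Q_{\omega'_{i+1}}$. Combined with $y_{i+1}\in Q_{a_{i+1}}$, conditions (C.1) and (C.3) rule out any choice other than $\omega'_{i+1}=a_{i+1}$; taking $\omega^{(i+1)}:=\omega'$ and noting that $I_{\omega^{(i+1)}}^{i+1}(x_\sigma)=F(I_{\omega^{(i)}}^i(x_\sigma),a_i)\cap Q_{a_{i+1}}\ni y_{i+1}$ completes the step. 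At $i=n-1$ one obtains $\omega^{(n-1)}=\sigma\in\scrS$.

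The main obstacle is not conceptual but the bookkeeping in the inductive step: one must check that $I_{\omega^{(i+1)}}^j(x_\sigma)=I_{\omega^{(i)}}^j(x_\sigma)$ for $j\leq i$ (which holds because the relevant prefixes agree) so that the reference trajectory $(y_j)$ stays inside the correct $I$-sets throughout. Structurally, (C.2) is what enables the backward construction of $x_\sigma$, (C.1) is the only input ruling out ``fan-out'' to a label other than $a_{i+1}$ at each branching, and (C.3) confines all symbols appearing in usable $\omega\in\scrS$ to $V$. Once the induction is in place, the counting $r_{inv}(n,Q)\geq\|M_{Q,V}^{n-1}\|_1$ and the passage to the spectral radius are routine.
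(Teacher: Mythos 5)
Your proof is correct and follows essentially the same route as the paper: the upper bound is the preceding proposition, and the lower bound comes from forcing every $M_{Q,V}$-admissible word into every spanning set via (C.1)--(C.3), giving $r_{inv}(n,Q)\geq\|M_{Q,V}^{n-1}\|_1$ and then Gelfand's formula. The only difference is organizational --- the paper runs an induction on the time horizon $n$ to show the admissible-word set is the \emph{unique} $(n,Q)$-spanning set, whereas you induct along a fixed word using a backward-constructed witness point $x_\sigma$, which in effect just makes explicit the step the paper compresses into ``Condition (C.2) tells us that $\scrS_u\subset\scrS$.''
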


\begin{proof}
To simplify the notation, set $M=M_{Q,V}$. It is easy to see from conditions (C.1) and (C.3) that the set $\scrS_2=\{u_0u_1:M_{u_0u_1}=1\}$ is the only $(2,Q)$-spanning set of $Q$. Assume that
\[\scrS_n=\{u_0u_1\cdots u_{n-1}:M_{u_0u_1}\cdot M_{u_1u_2}\cdots M_{u_{n-2}u_{n-1}}=1\}\]
is the only $(n,Q)$-spanning set of $Q$. We will show that $\scrS_{n+1}=\{u_0u_1\cdots u_{n}:M_{u_0u_1}\cdot M_{u_1u_2}\cdots M_{u_{n-1}u_{n}}=1\}$ is the only $(n+1,Q)$-spanning set of $Q$. Let $\scrS$ be an $(n+1,Q)$-spanning set. Then $\scrS\subset\scrS_{n+1}$ and
\[\scrS|_n=\{w\in U^n:\exists u\in\scrS~s.t.~w_i=u_i,i=0,\ldots,n-1\}\]
is an $(n,Q)$-spanning set. By assumption, we see that $\scrS_n=\scrS|_n$. For every $u\in\scrS_n$, let $B_u:=\{b\in V| M_{u_{n-1}b}=1\}$ and $\scrS_u:=\{ub|b\in B_u\}$.
Condition (C.2) tells us that $\scrS_u\subset\scrS$, and thus $\scrS_{n+1}\subset\scrS$. It immediately follows that $\scrS_{n+1}$ is the only $(n+1,Q)$-spanning set of $Q$ and $r_{inv}(n,Q)=\sharp\scrS_n$ for $n\geq 2$.
A standard induction on $n$ then yields
\begin{align*}
\sharp\scrS_{n}&=\sum_{u_{0} \in V} \sum_{u_{n-1} \in V}\left(M^{n-1}\right)_{u_{0}, u_{n-1}}.
\end{align*}
Hence $r_{inv}(n,Q)=\sharp\scrS_n=\|M^{n-1}\|_1$, and thus this with Gelfand formula shows that
$h_{inv}(Q)=\log \rho(M)$.
\end{proof}

\begin{rem}
(1) The formula for invariance entropy in Theorem~\ref{thm:symbolic-log-radius} is completely analogous to that for topological entropy of Markov subshifts (see for example Theorem 3.48 in~\cite{PK2000}). The latter gives us a motivation for the former since a Markov subshift can be described by a binary matrix.

(2) Suppose that $V\subset U$ satisfies the conditions of Theorem~\ref{thm:symbolic-log-radius}. Let $\calA_V=\{Q_a:a\in V\}$ and define $G_V:\calA_V\to U$ by $G(Q_a)=a$ for every $a\in V$. Then $(\calA_V,G_V)$ is an invariant partition of $Q$. From Theorem~\ref{thm:symbolic-log-radius}, we see $h_{inv}(Q)=\log \rho(M_{Q,V})$. It is natural to wonder if $h_{inv}^{fb}(Q)=h_{inv}(\calA_V,G_V)$. However, these conditions of  Theorem~\ref{thm:symbolic-log-radius} are not sufficient (see Exa.~\ref{exa:inv-fb-exa}). But the invariance feedback entropy of $Q$ can be determined by this invariant partition.
\end{rem}

Under the conditions of Theorem~\ref{thm:symbolic-log-radius}, we say $(\calB,G_\calB)$ is a \emph{refinement} of $(\calA_V,G_V)$ if $\calB$ is cover of $Q$, every element $B$ of $\calB$ is contained in some element $A$ of $\calA_V$, and $G_\calB(B)=G_V(A)$ for every $B\in\calB$. Let $\scrB(\calA_V,G_V)$ denote all the refinements of $(\calA_V,G_V)$. We call $(\calC,G_\calC)\in\scrB(\calA_V,G_V)$ an \emph{atom refinement} of $(\calA_V,G_V)$ if $\calC=\{\{x\}:x\in Q\}$ and $\sharp(F(x,a)\cap Q_b)$ is at most $1$ for every $a,b\in V$ and $x\in Q_a$. Note that if $(\calA_V,G_V)$ has an atom refinement then it is unique.

\begin{cor}\label{cor:IFE-equi-partitions}
Under the conditions of Theorem~\ref{thm:symbolic-log-radius},
\[h_{inv}^{fb}(Q)=\inf\{h_{inv}(\calB,G_\calB):(\calB,G_\calB)\in\scrB(\calA_V,G_V)\}.\]
\end{cor}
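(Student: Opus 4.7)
My plan is to reduce the corollary to the set-theoretic identification
\[\{\,(\calA,G) : (\calA,G) \text{ is an invariant cover of } Q\,\} = \scrB(\calA_V, G_V).\]
Once this is established, the infimum defining $h_{inv}^{fb}(Q)$ ranges over exactly the family on the right, and the claimed equality is immediate.

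The forward direction --- that every $(\calB,G_\calB) \in \scrB(\calA_V, G_V)$ is an invariant cover of $Q$ --- is a one-line consequence of the definitions: if $B \subset A = Q_a$ for some $A \in \calA_V$ and $G_\calB(B) = G_V(A) = a$, then $F(B, G_\calB(B)) \subset F(Q_a, a) \subset Q$ by the very definition of $Q_a$, and $\calB$ covers $Q$ by hypothesis.

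The substantive step is the converse: every invariant cover $(\calA, G)$ of $Q$ automatically lies in $\scrB(\calA_V, G_V)$. Fix a nonempty $A \in \calA$. The containment $F(A, G(A)) \subset Q$ forces $A \subset Q_{G(A)}$, so in particular $Q_{G(A)} \neq \emptyset$; hypothesis (C.3) then rules out $G(A) \in U \setminus V$, giving $G(A) \in V$. Hypothesis (C.1), the pairwise disjointness of $\{Q_a : a \in V\}$, shows that $Q_{G(A)}$ is the unique element of $\calA_V$ containing $A$. Since $G_V(Q_{G(A)}) = G(A)$ by construction of $G_V$, the compatibility requirement built into the definition of refinement holds automatically, yielding $(\calA, G) \in \scrB(\calA_V, G_V)$.

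I do not anticipate any genuine obstacle; the only point worth highlighting is the role of (C.1) and (C.3), which together force every invariant cover of $Q$ to be compatible --- both in its underlying sets and in its control assignment --- with the canonical invariant partition $(\calA_V, G_V)$, and this compatibility is precisely what membership in $\scrB(\calA_V, G_V)$ demands. Combining the two directions, the two infima in the statement of the corollary are taken over the same family, and the identity follows.
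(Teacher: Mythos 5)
Your proof is correct and follows essentially the same route as the paper's: the key step in both is that hypotheses (C.1) and (C.3) force the control assigned to any element of an invariant cover to lie in $V$ and force that element into a unique $Q_a$, so every invariant cover is a refinement of $(\calA_V,G_V)$. You additionally spell out the (easy) converse inclusion, which the paper leaves implicit.
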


\begin{proof}
Suppose that $(\calB,G_\calB)$ is an invariant cover. By (C.3) in Theorem~\ref{thm:symbolic-log-radius}, For every $B\in\calB$, $G_\calB(B)\in V$. Since $B\subset \cup_{A\in\calA_V}A$, it follows from (C.1) in Theorem~\ref{thm:symbolic-log-radius} that there exists only one element $A\subset \calA_V$ such that $B\subset A$. Hence $G_\calB(B)=G_V(A)$ and it follows that $(\calB,G_\calB)$ is a refinement of $(\calA_V,G_V)$.
\end{proof}



\begin{thm}\label{thm:IFE-equi-atom-partitions}
Under the conditions of Theorem~\ref{thm:symbolic-log-radius}, if moreover $\sharp Q$ is finite and $(\calC,G_\calC)$ is the atom refinement of $(\calA_V,G_V)$, then
$h_{inv}^{fb}(Q)=h_{inv}(\calC,G_\calC)$.
\end{thm}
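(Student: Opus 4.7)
The plan is as follows. By Corollary~\ref{cor:IFE-equi-partitions},
\[h_{inv}^{fb}(Q) = \inf\{h_{inv}(\calB, G_\calB) : (\calB, G_\calB) \in \scrB(\calA_V, G_V)\},\]
and since $(\calC, G_\calC)$ itself lies in $\scrB(\calA_V, G_V)$, the inequality $h_{inv}^{fb}(Q) \leq h_{inv}(\calC, G_\calC)$ is immediate. The remaining task is to prove the reverse inequality $h_{inv}(\calB, G_\calB) \geq h_{inv}(\calC, G_\calC)$ for every refinement $(\calB, G_\calB) \in \scrB(\calA_V, G_V)$.

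To do this, I would fix such a $\calB$ and an arbitrary $(n, Q)$-spanning set $\calS_\calB$ in $(\calB, G_\calB)$. The core idea is to embed every atom trajectory into some sequence of $\calS_\calB$. Given an admissible atom sequence $\gamma = \{x_0\}\{x_1\}\cdots\{x_{n-1}\} \in W_n(\calC, G_\calC)$ with inputs $a_t = G_\calC(\{x_t\})$, I plan to build $\beta = B_0 B_1 \cdots B_{n-1} \in \calS_\calB$ with $x_t \in B_t$ for all $t$ by induction on $t$. The base case uses that the initial-element set of $\calS_\calB$ covers $Q$. The inductive step applies the spanning inclusion $F(B_t, a_t) \subset \bigcup_{B' \in P(\beta|_{[0,t]})} B'$ to $x_{t+1} \in F(x_t, a_t) \subset F(B_t, a_t)$ to obtain some $B_{t+1} \in P(\beta|_{[0,t]})$ containing $x_{t+1}$; by the very definition of $P$, this extends the prefix $B_0 \cdots B_t$ to a sequence in $\calS_\calB$.

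The decisive observation is that the atom-refinement condition $\sharp(F(x,a)\cap Q_b) \leq 1$, together with the fact that each $B' \in \calB$ sits inside a unique $Q_b$ (because $\calA_V$ is a partition by~(C.1) and $\calB$ refines $\calA_V$), forces $\sharp(F(x_t, a_t) \cap B') \leq 1$ for every $B' \in \calB$. Hence distinct points of $F(x_t, a_t)$ must land in distinct elements of $P(\beta|_{[0,t]})$, yielding the key inequality
\[\sharp P(\beta|_{[0,t]}) \geq \sharp F(x_t, a_t) = \sharp D_\calC(\{x_t\}).\]

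Multiplying over $t = 0, \ldots, n-2$ and discarding the nonnegative initial-element factor gives $N(\calS_\calB) \geq \prod_{t=0}^{n-2} \sharp D_\calC(\{x_t\})$. Choosing $\gamma$ to maximize the right-hand side and invoking the formula from the proof of Theorem~\ref{thm:IFE-of-quasi-equals-log-sequences} identifies this maximum with $r_{inv}(n, Q, \calC, G_\calC)/\sharp Q$. Taking $\frac{1}{n}\log$ and letting $n \to \infty$ then produces $h_{inv}(\calB, G_\calB) \geq h_{inv}(\calC, G_\calC)$, which finishes the proof. The main obstacle is verifying that distinct successors of $x_t$ under $F(\cdot, a_t)$ genuinely force distinct elements of $P(\beta|_{[0,t]})$; this is exactly where the atom-refinement hypothesis is used, and once it is in place, the remaining estimates reduce to bookkeeping.
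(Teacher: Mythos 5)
Your proof is correct and follows essentially the same strategy as the paper's: reduce to refinements via Corollary~\ref{cor:IFE-equi-partitions}, embed an atom trajectory into an arbitrary spanning set of the refinement, and use the atom-refinement condition (each $B'\in\calB$ lies in a single $Q_b$, so it meets $F(x_t,a_t)$ in at most one point) to bound $\sharp P(\beta|_{[0,t]})$ from below by $\sharp D_\calC(\{x_t\})$. The only, harmless, difference is that you maximize over all of $W_n(\calC,G_\calC)$ and invoke the formula from Theorem~\ref{thm:IFE-of-quasi-equals-log-sequences}, whereas the paper tracks the optimal irreducible periodic sequence supplied by Theorem~\ref{thm:invariant-partition-log-weight}.
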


\begin{proof}
Since $(\calC,G_\calC)$ is an invariant partition of $Q$, it immediately follows from  Theorem~\ref{thm:invariant-partition-log-weight} that
$h_{inv}(\calC,G_\calC)=\overline{w}^*(\calC,G_\calC)$.
Take an irreducible periodic sequence $c$ in $(\calC,G_\calC)$ such that
$\overline{w}^*(\calC,G_\calC)=\overline{w}(c)$.
We can without loss of generality assume that $c=(C_i)_{i=0}^{k-1}$, where $k\leq\sharp Q$. Fixing $m\in\N$ and a refinement $(\calB,G_\calB)$ of $(\calA,G)$, let
\[\beta_{c,m}:=\underbrace{C_0\cdots C_{k-1}\cdots C_0\cdots C_{k-1}}_{m}C_0\]
and $\calS$ be an $(mk+1,Q)$-spanning set in $(\calB,G_\calB)$. Since $P(\alpha)$ covers $Q$ for any $\alpha\in\calS$ and $\sharp C_0=1$, there exists $\alpha^0\in\calS$ so that $C_0\subset \alpha^0(0)$ and
\[F(C_0,G_{\calB}(C_0))\subset F(\alpha^0(0),G_\calB(\alpha^0(0)))\subset \bigcup_{A' \in P\big(\alpha^0|_{[0,0]}\big)} A'.\]
Thus there exists $\alpha^1\in\calS$ such that $C_0\subset\alpha^1(0), C_1\subset \alpha^1(1)$.
Repeating this process, we can find $\alpha^{mk+1}\in\calS$ such that
\[C_i\subset\alpha^{mk+1}(jk+i),~j=0,\ldots,m-1,
~i=0,\ldots,k-1,~C_0\subset\alpha^{mk+1}(mk).\]
Since $(\calC,G_\calC)$ is the atom refinement,
\begin{equation}\label{eq:DC}
\sharp D_\calC(\{x\})\leq D^*_x
\end{equation}
for any $x\in Q$,
where
\[D^*_x=\min_{\substack{(\calB,G_\calB)\in\scrB(\calA,G)\\A\in\calB\\\{x\}\subset A}}\min\{\sharp\calF:\calF\subset D_\calB(A),~F(A,G_\calB(A))\subset\bigcup_{A'\in\calF}A'\}.\]
Replacing $\{x\}$ in (\ref{eq:DC}) by $C_i$ implies that
\[\sharp D_\calC(C_i)\leq \sharp P(\alpha^{mk+1}|_{[0,jk+i]})=\sharp P(\alpha^{mk+1}|_{[0,jk+i]}),~j=0,\ldots,m-1,
~i=0,\ldots,k-1.\]
Then
\[N(\mathcal{S})\geq\prod_{t=0}^{mk} \sharp P(\alpha^{mk+1}|_{[0,t]})\geq\prod_{t=0}^{mk-1} \sharp P(\alpha^{mk+1}|_{[0,t]})\geq\left(\prod_{i=0}^{k-1}\sharp D_\calC(C_i)\right)^m.\]
Since $\calS$ is arbitrary,
\[r_{inv}(mk+1,Q,\calB,G_\calB)\geq\left(\prod_{i=0}^{k-1}\sharp D_\calC(C_i)\right)^m.\]
It follows that
\begin{align*}
h_{inv}(\calB,G_\calB)&=\lim_{m\to\infty}\frac{1}{mk+1}\log r_{inv}(mk+1,Q,\calB,G_\calB)\\
&\geq \lim_{m\to\infty}\frac{1}{mk+1}\log \left(\prod_{i=0}^{k-1}\sharp D_\calC(C_i)\right)^m\\
&=\overline{w}(c)=h_{inv}(\calC,G_\calC).
\end{align*}
This together with Corollary~\ref{cor:IFE-equi-partitions} yields the desired equality.
\end{proof}

\begin{exam}\label{exa:inv-fb-exa}
Let $\Sigma=(X,U,F)$ be a system, where $X=\{0,1,2,3,4,5\}$ and $U=\{a,b,c\}$. The transition function $F$ is illustrated by Fig.~\ref{fig:partion-not-sufficient}.

\begin{figure}[H]
  \centering
\begin{tikzpicture}[scale=0.4]
\tikzset{
  LabelStyle/.style = {text = black, font = \bfseries,fill=white },
  VertexStyle/.style = { fill=white, draw=black,circle,font = \bfseries},
  EdgeStyle/.style = {-latex, bend left} }
  \SetGraphUnit{5}
  \Vertex{0}
\tikzset{VertexStyle/.style = { fill=white, draw=white,color=white}}
  \SO(0){6}
\tikzset{VertexStyle/.style = { fill=white, draw=black,circle,font = \bfseries}}
  \SO(6){1}
  \EA(0){2}
\tikzset{VertexStyle/.style = { fill=white, draw=white,color=white}}
  \SO(2){7}
\tikzset{VertexStyle/.style = { fill=white, draw=black,circle,font = \bfseries}}
  \SO(7){3}
  \WE(6){4}
\tikzset{VertexStyle/.style = { fill=white, draw=black,circle,font = \bfseries,dashed}}
  \EA(7){5}
  \tikzset{EdgeStyle/.style = {-latex}}
  \Edge[label = a](0)(2)
  \Edge[label = a](1)(3)
  \Edge[label = b](2)(1)
  \Edge[label = b](3)(0)
  \Edge[label = a](0)(4)
  \tikzset{EdgeStyle/.style = {-latex, bend left=70}}
  \tikzset{EdgeStyle/.style = {-latex}}
  \tikzset{EdgeStyle/.style = {-latex, bend left}}
  \tikzset{EdgeStyle/.style = {-latex,dashed}}
  \Edge[label = {b,c}](0)(5)
  \Edge[style={pos=.25},label = {b,c}](1)(5)
  \Edge[label = {a,c}](2)(5)
  \Edge[label = {a,c}](3)(5)
  \tikzset{EdgeStyle/.style = {-latex,bend right=9,dashed}}
  \Edge[style={pos=.25},label = {a,b}](4)(5)
  \tikzset{EdgeStyle/.style = {-latex}}
  \Loop[dist = 4cm,label = c,style={-latex,thick}](4.west)
  \Loop[dist = 4cm,dir=EA,label ={a,b,c},style={-latex,thick,dashed}](5.east)
\end{tikzpicture}
\caption{}\label{fig:partion-not-sufficient}
\end{figure}

The set of interest is $Q:=\{0,1,2,3,4\}$. Let
\begin{align*}
\calA_1&=\{A_{10},A_{11},A_{12}\},~A_{10}=\{0,1\},~A_{11}=\{2,3\},~A_{12}=\{4\},\\
\calA_2&=\{A_{20},A_{21},A_{22},A_{23},A_{24}\},~A_{20}=\{0\},~A_{21}=\{1\},
~A_{22}=\{2\},~A_{23}=\{3\},~A_{24}=\{4\},\\
\calA_3&=\{A_{30},A_{31},A_{32},A_{33}\},~A_{30}=\{0\},~A_{31}=\{1\},
~A_{32}=\{2,3\},~A_{33}=\{4\}.
\end{align*}
Define $G_i:\calA_i\to U$, $i=1,2,3$ by
\begin{align*}
G_1(A_{10})&=a,~G_1(A_{11})=b,~G_1(A_{12})=c,\\
G_2(A_{20})&=a,~G_2(A_{21})=a,~G_2(A_{22})=b,~G_2(A_{23})=b,~G_2(A_{24})=c,\\
G_3(A_{30})&=a,~G_3(A_{31})=a,~G_3(A_{32})=b,~G_3(A_{33})=c.
\end{align*}
Then
\begin{align*}
h_{inv}(Q)=0,~h_{inv}(\calA_1,G_1)=\frac{1}{2},~h_{inv}(\calA_2,G_2)=\frac{1}{4},~ h_{inv}(\calA_3,G_3)=1,~h_{inv}^{fb}(Q)=\frac{1}{4}.
\end{align*}
\end{exam}

\begin{proof}
Clearly $Q_a=\{0,1\}$, $Q_b=\{2,3\}$, $Q_c=\{4\}$, and thus $U$ is a cover of $Q$. It is obvious that conditions (C.1) and (C.3) in Theorem~\ref{thm:symbolic-log-radius} hold. Since $Q_b\subset F(Q_a,a)$, $Q_c\subset F(0,a)$, $Q_a\subset F(Q_b,b)$, and $Q_c\subset F(Q_c,c)$,
condition (C.2) in Theorem~\ref{thm:symbolic-log-radius} holds.
From Fig.~\ref{fig:partion-not-sufficient}, we have
\[M_{Q,U}=\begin{pNiceMatrix}[first-row,last-col,nullify-dots]
a & b & c &   \\
0 & 1 & 1 & a \\
1 & 0 & 0 & b \\
0 & 0 & 1 & c \\
\end{pNiceMatrix}.\]
A brief computation shows that $\rho(M_{Q,U})=1$. It then follows from Theorem~\ref{thm:symbolic-log-radius} that  $h_{inv}(Q)=0$.

We now compute $h_{inv}(\calA_i,G_i)$, $i=1,2,3$. Since
\[M_{\calA_1,G_1}=\begin{pNiceMatrix}[first-row,last-col,nullify-dots]
A_{10} & A_{11} & A_{12} &   \\
0 & 1 & 1 & A_{10} \\
1 & 0 & 0 & A_{11} \\
0 & 0 & 1 & A_{12} \\
\end{pNiceMatrix},~~
W_{\calA_1,G_1}=\begin{pNiceMatrix}[first-row,last-col,nullify-dots]
A_{10} & A_{11} & A_{12} &   \\
0 & 2 & 2 & A_{10} \\
1 & 0 & 0 & A_{11} \\
0 & 0 & 1 & A_{12} \\
\end{pNiceMatrix},
\]

\[M_{\calA_2,G_2}=\begin{pNiceMatrix}[first-row,last-col,nullify-dots]
A_{20} & A_{21} & A_{22} & A_{23} & A_{24} &   \\
0   & 0   & 1   & 0   & 1   & A_{20} \\
0   & 0   & 0   & 1   & 0   & A_{21} \\
0   & 1   & 0   & 0   & 0   & A_{22} \\
1   & 0   & 0   & 0   & 0   & A_{23} \\
0   & 0   & 0   & 0   & 1   & A_{24} \\
\end{pNiceMatrix},~~
W_{\calA_2,G_2}=\begin{pNiceMatrix}[first-row,last-col,nullify-dots]
A_{20} & A_{21} & A_{22} & A_{23} & A_{24} &   \\
0   & 0   & 2   & 0   & 2   & A_{20} \\
0   & 0   & 0   & 1   & 0   & A_{21} \\
0   & 1   & 0   & 0   & 0   & A_{22} \\
1   & 0   & 0   & 0   & 0   & A_{23} \\
0   & 0   & 0   & 0   & 1   & A_{24} \\
\end{pNiceMatrix},
\]

\[M_{\calA_3,G_3}=\begin{pNiceMatrix}[first-row,last-col,nullify-dots]
A_{30} & A_{31} & A_{32} & A_{33} &   \\
0      & 0      & 1      & 1      & A_{30} \\
0      & 0      & 1      & 0      & A_{31} \\
1      & 1      & 0      & 0      & A_{32} \\
0      & 0      & 0      & 1      & A_{33} \\
\end{pNiceMatrix},~~
W_{\calA_3,G_3}=\begin{pNiceMatrix}[first-row,last-col,nullify-dots]
A_{30} & A_{31} & A_{32} & A_{33} &   \\
0      & 0      & 2      & 2      & A_{30} \\
0      & 0      & 1      & 0      & A_{31} \\
2      & 2      & 0      & 0      & A_{32} \\
0      & 0      & 0      & 1      & A_{33} \\
\end{pNiceMatrix},
\]

it follows by a straightforward calculation that
\[\rho(M_{\calA_1,G_1})=1,~~\rho(W_{\calA_1,G_1})=\sqrt{2},~~\|W_{\calA_1,G_1}\|_\infty=2,\]
\[\rho(M_{\calA_2,G_2})=1,~~\rho(W_{\calA_2,G_2})=\sqrt[4]{2},~~\|W_{\calA_2,G_2}\|_\infty=2,\]
\[\rho(M_{\calA_3,G_3})=\sqrt{2},~~\rho(W_{\calA_3,G_3})=\sqrt{6},~~\|W_{\calA_3,G_3}\|_\infty=2.\]
It then follows from Theorem~\ref{thm:upper-lower-bounds-partition} that $h_{inv}(\calA_1,G_1)=\frac{1}{2}$ and $h_{inv}(\calA_2,G_2)=\frac{1}{4}$. Noting that
$\overline{w}^*(\calA_3,G_3)=\overline{w}(c)$, where $c=A_{30}A_{32}$,
we have by Theorem~\ref{thm:invariant-partition-log-weight} $h_{inv}(\calA_3,G_3)=1$.

It is not difficult to check that $(A_2,G_2)$ is the atom refinement, and therefore Theorem~\ref{thm:IFE-equi-atom-partitions} asserts that $h_{inv}^{fb}(Q)=\frac14$.
\end{proof}

\section*{Acknowledgements}
The project was supported by National Nature Science Funds of China (No. 12171492).


\end{document}